\documentclass[11pt,a4paper]{article}

\usepackage[T1]{fontenc}
\usepackage{lmodern}
\usepackage{microtype}
\usepackage{amsmath,amssymb,amsthm,mathtools}
\usepackage{enumitem}
\usepackage{authblk}
\usepackage[margin=1in]{geometry}
\usepackage{xcolor}
\usepackage[colorlinks=true,linkcolor=blue!55!black,citecolor=blue!55!black,urlcolor=blue!55!black]{hyperref}

\newtheorem{thm}{Theorem}[section]

\newtheorem{lemma}[thm]{Lemma}

\newtheorem{conj}[thm]{Conjecture}

\newtheorem{example}{Example}[section]
\newtheorem{defin}{Definition}[section]

\begin{document}
\renewcommand{\baselinestretch}{1.3}
%%%%%%%%%%%%%%%%%%%%%%%%%%%%%%%%%%%%%%%%%%%%%%%%%%%%%%%%%%%%%%%%%%%%%%%%%%%%%%%%%%%%%%%%
%%%%%%%%%%%%%%%%%%%%%%%%%%%%%%%%%%%%%%%%%%%%%%%%%%%%%%%%%%%%%%%%%%%%%%%%%%%%%%%%%%%%%%%%

\title{\bf Counting sunflowers with restricted matching number}

\author[1]{Haixiang Zhang\thanks{E-mail: \texttt{zhang-hx22@mails.tsinghua.edu.cn}}}
\author[2]{Mengyu Cao\thanks{Corresponding author. E-mail: \texttt{myucao@ruc.edu.cn}}}
\author[1]{Mei Lu\thanks{E-mail: \texttt{lumei@tsinghua.edu.cn}}}

\affil[1]{\small Department of Mathematical Sciences, Tsinghua University, Beijing 100084, China}
\affil[2]{\small Institute for Mathematical Sciences, Renmin University of China, Beijing 100086, China}

\date{}
\maketitle
\begin{abstract}
For $\mathcal H\subseteq\binom{[n]}k$, let $\nu(\mathcal H)$ denote its matching number and let $d_{\mathcal H}(E)$ denote the codegree of a $(k-1)$-set $E$.  We study the codegree moments
$
 co_p(\mathcal H)=\sum_{E\in\binom{[n]}{k-1}}d_{\mathcal H}(E)^p
$
and the number of copies of the $k$-uniform $l$-petal sunflower $S_{k,l}^{k-1}$ whose core has size $k-1$.  For fixed $k,s,l$ and $p$, and all sufficiently large $n$, we prove that among all $k$-uniform families with matching number at most $s$, both quantities are uniquely maximized by
\[
 \mathcal H_{n,k,s}=\left\{F\in\binom{[n]}k:F\cap[s]\neq\emptyset\right\}.
\]
Thus the sunflower result is the generalized Tur\'an problem $\operatorname{ex}_k(n,S_{k,l}^{k-1},M_{s+1})$.  When $k=3$, we make the threshold effective: for every integer $p\geq1$, the codegree-moment conclusion holds for $n\geq6s+3$.  The proof combines a Hilton--Milner-type stability theorem of Frankl and Kupavskii with a high-codegree-shadow reduction and a convex moment estimate.

    \medskip
    \noindent {\em MSC classification:}  05C35, 05D05, 05D15

    \noindent \textit{Key words:} Erd\H{o}s Matching Conjecture, Tur\'an-type problem, sunflowers, $\ell_p$-norm
\end{abstract}

\section{Introduction}

Let $[n]:=\{1,2,\ldots,n\}$ and $\binom{[n]}k:=\{S\subseteq[n]:|S|=k\}$.  A family $\mathcal H\subseteq\binom{[n]}k$ is \emph{intersecting} if $F_1\cap F_2\neq\emptyset$ for all $F_1,F_2\in\mathcal H$.  A \emph{full star} is a family isomorphic to $\{F\in\binom{[n]}k:1\in F\}$.  The classical Erd\H{o}s--Ko--Rado theorem determines the maximum size of an intersecting uniform family.

\begin{thm}[Erd\H{o}s-Ko-Rado theorem, \cite{Erdos-Ko-Rado-1961-313}]\label{EKR}	
	Let $ n \geq 2k $ and $ \mathcal{H} \subseteq \binom{[n]}{k} $ be a $k$-uniform intersecting family. Then $ |\mathcal{H}| \leq \binom{n - 1}{k - 1} $. Moreover, if $ n > 2k $, then $ |\mathcal{H}| = \binom{n - 1}{k - 1} $ if and only if $ \mathcal{H}$ is a full star.	
\end{thm}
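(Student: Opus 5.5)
We prove the Erd\H{o}s--Ko--Rado theorem (Theorem~\ref{EKR}) by Katona's cyclic permutation argument for the bound, and then a separate analysis of the equality case for $n>2k$.

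\textbf{Step 1: intervals on a cycle.} Fix a cyclic ordering $\sigma$ of $[n]$ and call a $k$-set an \emph{interval} of $\sigma$ if its elements are consecutive in $\sigma$. There are exactly $n$ intervals. We claim that, for $n\ge 2k$, an intersecting family contains at most $k$ of them. Suppose it contains some interval $A=\{a_1,\dots,a_k\}$, listed in cyclic order. Every other member interval meets $A$ and differs from it, so it either starts at $a_{i+1}$ or ends at $a_i$, for some $1\le i\le k-1$. For each such $i$, the interval ending at $a_i$ and the interval starting at $a_{i+1}$ are disjoint, since $n\ge 2k$. Hence at most one of each such pair lies in the family, which gives at most $1+(k-1)=k$ intervals in total.

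\textbf{Step 2: double counting.} Count the pairs $(\sigma,F)$ with $F\in\mathcal H$ an interval of $\sigma$. A fixed $k$-set is an interval in exactly $n\cdot k!\,(n-k)!$ of the $n!$ linear orderings, viewing cyclic orderings through linear ones. By Step 1 the total count is at most $n!\cdot k/n$, in the same normalization. Therefore $|\mathcal H|\,k!\,(n-k)!\le (n-1)!\,k$, that is, $|\mathcal H|\le \binom{n-1}{k-1}$.

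\textbf{Step 3: equality for $n>2k$.} This step is the main obstacle. If equality holds, then every cyclic ordering carries exactly $k$ intervals of $\mathcal H$.

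First we show that these $k$ intervals share a common point. Re-examine Step 1 with exactly one interval chosen from each pair. Suppose the member interval ending at $a_i$ is chosen, and consider the interval ending at $a_{i+1}$ versus the one starting at $a_{i+2}$. The interval ending at $a_i$ together with the interval starting at $a_{i+2}$ span $2k+1$ positions, which is at most $n$ since $n>2k$. So these two are disjoint, and consistency forces the chosen intervals to be those ending at $a_1,\dots,a_j$ and those starting at $a_{j+1},\dots$ for some threshold $j$. All of them contain a common point, which is the intersection of $k$ consecutive shifts.

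Next we pass from the cycle to the whole family. Take two cyclic orders differing by an adjacent transposition. Their common intervals force the two centres to coincide, or at least to be compatible. Since adjacent transpositions connect all cyclic orders, a single element $x$ lies in every interval of $\mathcal H$ in every cyclic order. Every $k$-set is an interval in some cyclic order, so $x\in F$ for all $F\in\mathcal H$. Thus $\mathcal H$ is contained in the star at $x$, and by cardinality it is the full star.

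Getting the transposition step right is delicate. As a fallback, I would instead use the Hilton--Milner-type shifting argument: shift $\mathcal H$ so that it becomes left-compressed, and show that a shifted intersecting family of size $\binom{n-1}{k-1}$ must be the star at $1$ when $n>2k$. Then check that unshifting preserves the star structure, since each shift is injective and keeps the size fixed.
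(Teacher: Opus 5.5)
The paper does not prove this statement. It is quoted from Erd\H{o}s--Ko--Rado and used as a black box, so your argument has to stand on its own.

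\textbf{Steps 1--2 and the first half of Step 3.} Steps 1 and 2 are Katona's argument and are correct. Your normalization remark mixes linear and cyclic counts, but the inequality $|\mathcal H|\,k!\,(n-k)!\le k\,(n-1)!$ is right. Equality does force exactly $k$ member intervals on every cyclic order. Your observation that, for $n>2k$, the interval ending at $a_i$ and the one starting at $a_{i+2}$ are disjoint is also correct. The pattern you then state is reversed, however. The observation gives ``ending at $a_i$ $\Rightarrow$ ending at $a_{i+1}$''. So the members are $A$, the intervals starting at $a_2,\dots,a_{j+1}$, and those ending at $a_{j+1},\dots,a_{k-1}$. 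These are exactly the $k$ intervals through $a_{j+1}$. Your list would contain the disjoint pair ending at $a_1$ and starting at $a_k$. This slip is cosmetic.

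\textbf{The gap: passing from one cyclic order to all of them.} It is false that an adjacent transposition forces the centres to coincide. Suppose the centre $c=u$ is swapped with its successor $v$. The surviving members are the $k-1$ intervals through $c$ containing both $u$ and $v$. In the new order, both ``centre $u$'' and ``centre $v$'' are consistent with them. Which one occurs depends on whether $v$ together with the $k-1$ predecessors of $u$ is a member of $\mathcal H$. That set is not an interval of the old order, so the old order cannot decide it, and ``compatible'' does not bridge this.

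\textbf{How to repair it.} Use only transpositions of adjacent $u,v$ with $c\notin\{u,v\}$.
\begin{itemize}
\item For $n\ge 2k$, at most one of the $k$ intervals through $c$ is destroyed. The new centre lies in the intersection of the survivors.
\item That intersection is $\{c\}$, unless the destroyed interval is the one starting or ending at $c$. In that case it is $\{c,c'\}$, with $c'$ the neighbour of $c$ on the other side.
\item The option $c'$ is excluded. Take the interval with $c'$ as its end nearest $c$, extending away from $c$. For $n>2k$ it avoids $c$, $u$ and $v$. So it is a common interval of both orders that would be a member by the new order but is a non-member by the old one.
\item Adjacent transpositions avoiding a fixed element $c$ generate all cyclic orders: with $c$ fixed, the other $n-1$ elements form a path.
\end{itemize}
Hence the centre is $c$ on every cyclic order. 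Since every $k$-set is an interval of some cyclic order, $\mathcal H$ lies in the star of $c$.

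\textbf{The fallback does not fill the gap either.} That a shift is injective and size-preserving says nothing about whether the preimage of a star is a star. Let $S_{1j}$ denote the shift replacing $j$ by $1$ where possible. You need, for instance, the following: if $\mathcal H$ is intersecting, $n>2k$, and $S_{1j}$ maps $\mathcal H$ onto the full star at $1$, then every member of $\mathcal T=\{T\in\binom{[n]\setminus\{1,j\}}{k-1}:T\cup\{j\}\in\mathcal H\}$ meets every $(k-1)$-subset of $[n]\setminus\{1,j\}$ outside $\mathcal T$. Connectivity of the Kneser graph $K(n-2,k-1)$ then forces $\mathcal H$ to be the star at $1$ or at $j$. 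The claim that a shifted extremal family is the star at $1$ also still needs its own proof.
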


An intersecting family $\mathcal F\subseteq\binom{[n]}k$ is \emph{trivial} if $\bigcap_{F\in\mathcal F}F\neq\emptyset$.  The Hilton--Milner theorem determines the maximum size of a nontrivial intersecting family.  The Erd\H{o}s--Ko--Rado and Hilton--Milner theorems and their extensions have long been central topics in extremal set theory; see, for example, \cite{Ahlswede-Khachatrian-1997,Ahlswede1996,Frankl-1978-1,Frankl-Furedi-1986,Frankl--Furedi-1991,Wilson-1984}.

\begin{thm}[Hilton-Milner theorem, \cite{Hilton-Milner-1967}]\label{HM}
   Let $ n \geq 2k \geq 6 $ and $ \mathcal{H} \subseteq \binom{[n]}{k} $ be a $k$-uniform intersecting family. If $ \mathcal{H} $ is nontrivial, then $ |\mathcal{H}| \leq \binom{n - 1}{k - 1} - \binom{n - k - 1}{k - 1} + 1 $. Moreover, if $ n > 2k > 6 $, then $ |\mathcal{H}| = \binom{n - 1}{k - 1} - \binom{n - k - 1}{k - 1} + 1 $ if and only if $ \mathcal{H} \cong \{F\in\binom{[n]}{k}:1\in F,F\cap[2,k+1]\neq\emptyset\}\cup\{[2,k+1]\}$.
\end{thm}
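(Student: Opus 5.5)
The plan is to prove the Hilton--Milner bound by the standard shifting (compression) argument together with a cross-intersection estimate on the links of the element $1$.

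\textbf{Step 1: shifting.} For $i<j$ let $S_{ij}$ be the usual $(i,j)$-shift, which replaces $j$ by $i$ in a set whenever the resulting set is not already in the family. Shifting preserves both the size and the intersecting property. It may, however, turn a nontrivial family into a trivial one. So I will shift only as long as the family stays nontrivial. If some shift $S_{ij}$ would make $\mathcal H$ trivial, then after it every set contains a common element, which must be $i$. In that case every set of $\mathcal H$ meets $\{i,j\}$, and I treat this configuration directly: $\mathcal H=\mathcal A\cup\mathcal B\cup\mathcal C$, where the sets of $\mathcal A$ contain $i$ but not $j$, the sets of $\mathcal B$ contain $j$ but not $i$, and the sets of $\mathcal C$ contain both. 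The families $\mathcal A(i)$ and $\mathcal B(j)$, obtained by deleting $i$ and $j$ respectively, are cross-intersecting $(k-1)$-families on $[n]\setminus\{i,j\}$. Otherwise we reach a shifted nontrivial intersecting family, and then $[2,k+1]$ (or some set avoiding $1$) lies in it.

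\textbf{Step 2: decomposition.} In the shifted case write $\mathcal H(1)=\{F\setminus\{1\}:1\in F\in\mathcal H\}$ and $\mathcal H(\bar1)=\{F\in\mathcal H:1\notin F\}$. The family $\mathcal H(\bar1)$ is nonempty and intersecting. Moreover, $\mathcal H(1)$ and $\mathcal H(\bar1)$ are cross-intersecting on $[2,n]$, with uniformities $k-1$ and $k$.

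\textbf{Step 3: the key cross-intersection inequality.} If $\mathcal A\subseteq\binom{[2,n]}{k-1}$ and a nonempty $\mathcal B\subseteq\binom{[2,n]}{k}$ are cross-intersecting and $n-1\ge 2k-1$, then
\[
|\mathcal A|+|\mathcal B|\le \binom{n-1}{k-1}-\binom{n-k-1}{k-1}+1 .
\]
This is the main obstacle. I would first try Kruskal--Katona, or equivalently a Katona cycle or averaging argument. The complements of the sets of $\mathcal B$, viewed in $[2,n]$, have size $n-1-k$, and every member of $\mathcal A$ must avoid being contained in any such complement. So $|\mathcal A|\le\binom{n-1}{k-1}-|\partial_{k-1}(\overline{\mathcal B})|$, where $\partial_{k-1}$ is the $(k-1)$-shadow. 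It then suffices to show that $|\partial_{k-1}(\overline{\mathcal B})|-|\mathcal B|\ge\binom{n-k-1}{k-1}-1$ for nonempty $\mathcal B$. By Kruskal--Katona (Lov\'asz form) the shadow of $m$ sets of size $n-1-k$ grows fast enough whenever $n-1-k\ge k$, that is $n\ge 2k+1$, and the minimum is attained at $|\mathcal B|=1$. The borderline case $n=2k$ is handled separately. There every set and its complement cannot both lie in $\mathcal H$, so $|\mathcal H|\le\frac12\binom{2k}{k}=\binom{2k-1}{k-1}$, and the stated bound follows from the HM formula with $\binom{k-1}{k-1}=1$.

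The non-shifted case from Step 1 is handled by the same inequality applied to the cross-intersecting pair $\mathcal A(i)$, $\mathcal B(j)$, using $|\mathcal C|\le\binom{n-2}{k-2}$. Here $\mathcal B$ is nonempty because the family was nontrivial before the shift. I also use $k\ge3$ so that the resulting sum stays below the HM bound.

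\textbf{Step 4: uniqueness for $n>2k>6$.} The convexity estimate in Step 3 is strict unless $|\mathcal B|=1$ and $\mathcal A$ consists exactly of all $(k-1)$-sets meeting the single set $B$. This forces $\mathcal H\cong\{F:1\in F,\ F\cap[2,k+1]\ne\emptyset\}\cup\{[2,k+1]\}$. I then need to check that no equality case is created by the shifts. Shifting can only map a family onto the HM family if the original family was already isomorphic to it, or else to the triangle-type family $\{F:|F\cap[3]|\ge2\}$. The sizes of these two agree only when $k=3$, and that case is excluded by $k>3$.
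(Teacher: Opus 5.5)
The paper does not prove this theorem; it quotes it from Hilton and Milner as background, so there is no in-paper argument to compare with. Judged on its own, your proposal has a genuine gap at its core: the inequality of Step~3 is false.

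\textbf{Step 3 fails.} Take $\mathcal A=\{A\in\binom{[2,n]}{k-1}:2\in A\}$ and $\mathcal B=\{B\in\binom{[2,n]}{k}:2\in B\}$. These families are cross-intersecting, and $\mathcal B$ is nonempty and even intersecting. Yet
\[
|\mathcal A|+|\mathcal B|=\binom{n-2}{k-2}+\binom{n-2}{k-1}=\binom{n-1}{k-1},
\]
which exceeds your bound by $\binom{n-k-1}{k-1}-1\geq k-1>0$ whenever $n\geq 2k+1$. This pair is exactly $\mathcal H(1),\mathcal H(\bar1)$ for the full star centred at $2$. So the information you record in Step~2 (cross-intersection, with $\mathcal H(\bar1)$ nonempty and intersecting) does not encode the nontriviality of $\mathcal H$ at all.

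Correspondingly, the Kruskal--Katona claim is wrong. For this $\mathcal B$ you get $|\partial_{k-1}(\overline{\mathcal B})|-|\mathcal B|=0$. For $\mathcal B=\binom{[2,n]}{k}$ the difference is $\binom{n-1}{k-1}-\binom{n-1}{k}<0$. Passing from level $n-1-k\geq k$ down to level $k-1$ can shrink a family, so the minimum is not attained at $|\mathcal B|=1$.

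\textbf{What the shifted branch needs.} You must actually use shiftedness, which you never invoke after Step~1. For example, shiftedness gives $\partial\mathcal H(\bar1)\subseteq\mathcal H(1)$: replace any element of $B\in\mathcal H(\bar1)$ by $1$. Combined with cross-intersection, this forces $\mathcal H(\bar1)$ to be $2$-intersecting, which rules out the star example above. A correct Step~3 must be a statement about such restricted pairs, and it still needs a genuine proof. That proof is where the real content of Hilton--Milner lies.

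\textbf{The non-shifted branch.} This is not ``the same inequality.'' There $\mathcal A(i)$ and $\mathcal B(j)$ are both $(k-1)$-uniform on $n-2$ points. What you need is the classical lemma for two \emph{nonempty} cross-intersecting families,
\[
|\mathcal A'|+|\mathcal B'|\leq\binom{n-2}{k-1}-\binom{n-k-1}{k-1}+1 .
\]
Its Kruskal--Katona proof must use $\mathcal A'\neq\emptyset$ to restrict the range of $|\mathcal B'|$. Otherwise $\mathcal B'=\binom{[n]\setminus\{i,j\}}{k-1}$ breaks the same shadow estimate. Adding $|\mathcal C|\leq\binom{n-2}{k-2}$ then gives the Hilton--Milner bound exactly, by Pascal's rule, so no appeal to $k\geq3$ is needed there.

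\textbf{Uniqueness.} Step~4 rests on an unproved claim about which families can shift onto the Hilton--Milner family. It also relies on strictness of a ``convexity estimate'' that was never established. The equality characterization therefore also remains unproved in your write-up.
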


A \emph{matching} in $\mathcal H\subseteq\binom{[n]}k$ is a collection of pairwise disjoint members, and the \emph{matching number} $\nu(\mathcal H)$ is the maximum size of such a collection.  Thus intersecting families are precisely the nonempty families with matching number one.  The Erd\H{o}s Matching Conjecture asks for the maximum size of a uniform family with bounded matching number.

For $1\leq i \leq k$, let $$\mathcal{A}_{n,k,s,i}=\left\{F\in \binom{[n]}{k}:|F\cap [is+i-1]|\geq i\right\},$$ and  $$\mathcal{H}_{n,k,s}=\mathcal{A}_{n,k,s,1}=\left\{F\in\binom{[n]}{k}:F\cap[s]\neq\emptyset\right\}.$$ Note that $\nu(\mathcal{A}_{n,k,s,i})=\nu(\mathcal{H}_{n,k,s})=s$ for $1\leq i \leq k$.
\begin{conj}[Erd\H{o}s matching conjecture]\label{EMC}
	Let $\mathcal{F}\subseteq \binom{[n]}{k}$ satisfy $\nu(\mathcal{F})\leq s$ and $n\geq ks+k-1$. Then
    \[
    |\mathcal{F}|\leq \max\left\{\binom{n}{k}-\binom{n-s}{k},\binom{ks+k-1}{k}\right\}.
    \]
\end{conj}

The conjecture is known for sufficiently large $n$ \cite{Erdos1959,frankl2013,frankl2017maximum,luczak2014erdHos} and has been completely resolved for $k=3$.  {\L}uczak and Mieczkowska \cite{luczak2014erdHos} proved the latter assertion for sufficiently large $s$, and Frankl \cite{frankl2017maximum} completed the proof for every $s$.
\begin{thm}[Frankl \cite{frankl2017maximum}]\label{EMC k=3}
    Conjecture \ref{EMC} is true for $k=3$.
\end{thm}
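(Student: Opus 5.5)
Since Theorem~\ref{EMC k=3} is quoted from Frankl, here is the route I would take to reprove it. The approach is shifting followed by induction on $n$ via the link of the last vertex.

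\textbf{Reduction to shifted families.} Applying the standard $(i,j)$-shifts, I may assume that $\mathcal F\subseteq\binom{[n]}3$ is shifted. Shifting preserves $|\mathcal F|$ and does not increase $\nu(\mathcal F)$. For a shifted family, write
\[
\mathcal F(\bar n)=\{F\in\mathcal F:n\notin F\}, \qquad \mathcal G=\{F\setminus\{n\}:n\in F\in\mathcal F\}\subseteq\binom{[n-1]}2 .
\]
Clearly $\nu(\mathcal F(\bar n))\le s$. The key consequence of shiftedness is that $\nu(\mathcal G)\le s$ as well, provided $n\ge 3s+3$. Indeed, a matching of $s+1$ edges of $\mathcal G$ covers only $2s+2$ points of $[n-1]$. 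Shiftedness then lets me replace $n$ in each corresponding triple by a distinct unused element, and there are at least $s+1$ such elements because $n-1-(2s+2)\ge s$. This produces $s+1$ pairwise disjoint members of $\mathcal F$, a contradiction.

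\textbf{Induction step.} I bound the two parts separately.
\begin{itemize}[leftmargin=*]
\item By the Erd\H{o}s--Gallai theorem for graphs,
\[
|\mathcal G|\le\max\left\{\binom{2s+1}{2},\ \binom{n-1}2-\binom{n-1-s}2\right\}.
\]
\item By induction on $n$, $|\mathcal F(\bar n)|$ obeys the conjectured bound with $n-1$ in place of $n$.
\end{itemize}
When both maxima are attained by the star-type term, Pascal's rule gives
\[
\binom{n-1}3-\binom{n-1-s}3+\binom{n-1}2-\binom{n-1-s}2=\binom n3-\binom{n-s}3 ,
\]
which closes the induction. Frankl's general result for $n\ge(2s+1)k-s$, which here means $n\ge5s+3$, could also be used as an anchor for large $n$.

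\textbf{The main obstacle.} The difficulty is the intermediate range $3s+2\le n\lesssim 4s$. There the clique term $\binom{3s+2}3$ dominates for $n-1$, while the star term takes over at $n$. In this range the naive sum of the two maxima overshoots the target, so the plain induction fails.

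To handle it, I would first try to prove a strengthened statement for shifted families. The clique extremal family $\binom{[3s+2]}3$ has empty link at every $n>3s+2$. This suggests an inequality coupling the two parts: a large $\mathcal F(\bar n)$, close to the clique, forces $\mathcal G$ to be small. I would establish this by a stability form of Erd\H{o}s--Gallai combined with shiftedness. Concretely, if $\mathcal G$ contains the pair $\{s+1,s+2\}$ or the pair $\{1,2s+2\}$, then $\mathcal F$ contains many sets meeting $[n]\setminus[3s+2]$, which rules out near-clique structure in $\mathcal F(\bar n)$.

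As a fallback for the exact threshold $n=3s+3$, I would use a Kleitman-type averaging argument. I would partition $[n]$ into $s+1$ disjoint triples, note that each such partition contains at most $s$ members of $\mathcal F$, and average over all partitions. This gives $|\mathcal F|\le\frac{s}{s+1}\binom n3$ at $n=3s+3$, which is precisely the conjectured bound $\binom{3s+2}3$ there. That would serve as a base case from which the coupled induction can proceed.
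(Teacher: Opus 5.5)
The paper does not prove this statement; it imports Frankl's theorem. Your proposal therefore has to stand on its own, and as written it settles only the easy cases.

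The parts you carry out are correct:
\begin{itemize}
\item the reduction to shifted families;
\item the bound $\nu(\mathcal G)\le s$ for $n\ge 3s+3$ (note that $[n-1]$ supplies only $s$ unused points, so the $(s+1)$-st replacement element is $n$ itself);
\item the Pascal-rule step when both maxima are of star type;
\item Kleitman averaging at $n=3s+3$, which gives exactly $\frac{s}{s+1}\binom{3s+3}{3}=\binom{3s+2}{3}$.
\end{itemize}

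Now let $n_1$ be the least $n$ with $\binom n3-\binom{n-s}3\ge\binom{3s+2}3$. Then $n_1\approx3.49s$, and $n_1\ge3s+4$ once $s\ge2$. Your plain induction step fails at every $n$ with $3s+4\le n\le n_1$:
\begin{itemize}
\item For $n<n_1$ it gives $\binom{3s+2}3+\binom{n-1}2-\binom{n-1-s}2$, which exceeds the target $\binom{3s+2}3$ by the whole graph-star term.
\item At $n=n_1$ the inductive bound for $\mathcal F(\bar n)$ is $\binom{3s+2}3$, which is strictly larger than $\binom{n_1-1}3-\binom{n_1-1-s}3$ by minimality of $n_1$. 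Adding the Erd\H{o}s--Gallai term therefore overshoots $\binom{n_1}3-\binom{n_1-s}3$. For example, $s=2$, $n=10$ gives $56+15=71>64$.
\end{itemize}
So the Kleitman base case does not propagate even one step. Frankl's $n\ge5s+3$ theorem only covers $n>n_1$, where your induction would already work once $n_1$ is settled. For $s=1$ the window is empty and you do recover EKR for triples. For every $s\ge2$, however, the whole content of the theorem sits in the window $[3s+4,n_1]$.

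For that window you only state an intention to prove ``a strengthened statement coupling the two parts,'' and you never formulate it. Whatever form it takes, it must give exactly $|\mathcal F(\bar n)|+|\mathcal G|\le\binom{3s+2}3$, or the star bound at $n_1$, for every shifted $\mathcal F$ with $\nu(\mathcal F)\le s$ in this range. It must do so uniformly over intermediate structures such as $\mathcal A_{n,3,s,2}$, whose link at $n$ is $\binom{[2s+1]}2$. This is precisely what Frankl's long argument (and {\L}uczak--Mieczkowska's for large $s$) supplies.

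Your qualitative heuristic does not produce such an inequality. If $\{s+1,s+2\}\in\mathcal G$, shiftedness forces only the triples $\{a,b,x\}$ with $a<b\le s+2<3s+2<x$. There are about $\binom{s+2}2(n-3s-2)=O(s^3)$ of these in the window, which is the same order as $\binom{3s+2}3$. So nothing is ``ruled out'' without a precise count, and none is given. The missing step is the actual theorem, so the proposal is not a proof.
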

For general $k$, the conjecture remains open.  In the large-$n$ range, the first candidate $\mathcal H_{n,k,s}=\mathcal A_{n,k,s,1}$ is larger than the clique candidate.  We use the following theorem in that range.

\begin{thm}[Frankl \cite{frankl2013}]\label{result for EMC}
	Let $\mathcal F\subseteq\binom{[n]}k$ satisfy $\nu(\mathcal F)\leq s$.
	If
	$
	n\geq(2s+1)k-s,
	$
	then
	\[
	|\mathcal F|
	\leq|\mathcal H_{n,k,s}|
	=\binom nk-\binom{n-s}k.
	\]
\end{thm}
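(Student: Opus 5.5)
We would follow Frankl's original strategy: reduce to shifted families, split off the element $1$, and control the part avoiding $1$ by a shadow inequality.

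\textbf{Reduction to shifted families.} Since the bound is monotone in $\mathcal F$, we may assume $\mathcal F$ is maximal with $\nu(\mathcal F)\le s$. We then apply the shifting operators $S_{ij}$ ($i<j$). Each $S_{ij}$ preserves $|\mathcal F|$ and does not increase the matching number; this is a standard fact, proved by repairing any matching of the shifted family into a matching of $\mathcal F$ of the same size. Iterating, we may assume $\mathcal F$ is \emph{shifted}: whenever $F\in\mathcal F$, $j\in F$, $i\notin F$ and $i<j$, we have $(F\setminus\{j\})\cup\{i\}\in\mathcal F$.

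\textbf{Splitting off the element $1$.} We write $\mathcal F(1)=\{F\setminus\{1\}:1\in F\in\mathcal F\}\subseteq\binom{[2,n]}{k-1}$ and $\mathcal F(\bar1)=\{F\in\mathcal F:1\notin F\}\subseteq\binom{[2,n]}{k}$. Shiftedness gives $\partial\mathcal F(\bar 1)\subseteq\mathcal F(1)$, where $\partial$ is the $(k-1)$-shadow: if $G\in\mathcal F(\bar1)$ and $x\in G$, then $(G\setminus\{x\})\cup\{1\}\in\mathcal F$. Hence
\[
|\mathcal F|\le |\mathcal F(1)|+|\mathcal F(\bar 1)|, \qquad |\mathcal F(1)|\le\binom{n-1}{k-1}.
\]
It therefore suffices to show $|\mathcal F(\bar1)|\le\binom{n-1}{k}-\binom{n-s}{k}$, which is the bound for matching number $s-1$ on $n-1$ points. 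The plan is induction on $s$. The base case $s=1$ is Theorem~\ref{EKR}, whose hypothesis $n\ge 2k$ is guaranteed by $n\ge (2s+1)k-s$ at $s=1$. Note also that $n-1\ge(2(s-1)+1)k-(s-1)$ is implied by $n\ge(2s+1)k-s$, so the hypothesis passes to the inductive instance.

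\textbf{The key inequality.} There are two ways to get the bound on $\mathcal F(\bar 1)$, and we would try both.
\begin{enumerate}[label=(\roman*)]
\item Show directly that $\nu(\mathcal F(\bar1))\le s-1$ when $n\ge(2s+1)k-s$. Suppose $A_1,\dots,A_s\in\mathcal F(\bar1)$ are pairwise disjoint. Their union has $sk$ elements, leaving at least $n-sk\ge (s+1)k-s$ free elements. We would then try to use shiftedness to push one of the $A_i$ down onto $1$ and fill in a disjoint $(s+1)$-st set from the free elements, contradicting $\nu(\mathcal F)\le s$. This is exactly where a lower bound on $n$ of the form $(2s+1)k-s$ enters.
\item If (i) fails in general, use Frankl's shadow inequality: for $\mathcal G\subseteq\binom{[m]}k$ with $\nu(\mathcal G)\le s$, one has $|\mathcal G|\le s\,|\partial\mathcal G|$. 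Combined with $\partial\mathcal F(\bar1)\subseteq\mathcal F(1)$, this bounds $|\mathcal F(\bar1)|$ in terms of $|\mathcal F(1)|$. One then optimizes the sum $|\mathcal F(1)|+|\mathcal F(\bar1)|$ subject to this trade-off.
\end{enumerate}

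\textbf{Main obstacle.} We expect the key inequality to be the hard step. The shadow inequality $|\mathcal G|\le s|\partial\mathcal G|$ is itself proved by averaging over random partitions of the ground set into blocks of size $k$ (for $n\ge k(s+1)$) and counting, in each partition, the members of $\mathcal G$ against their shadows. Turning it into the sharp value $\binom nk-\binom{n-s}k$ requires checking that the extremal configuration $\mathcal H_{n,k,s}$ is the only balance point. It also requires showing that the loss in the estimate is absorbed exactly when $n\ge(2s+1)k-s$. Everything else, namely shifting and the split at $1$, is routine.
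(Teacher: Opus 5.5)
The paper gives no proof of this statement. It quotes Frankl's 2013 result and uses it as a black box, so your proposal has to stand on its own as a proof of Frankl's theorem. It does not, because the step carrying the content of the theorem is missing. Your preliminaries are fine: the shifting reduction, the split at $1$, the inclusion $\partial\mathcal F(\bar1)\subseteq\mathcal F(1)$, and the inheritance of the bound on $n$. But the bound on $|\mathcal F(\bar 1)|$ is never established, and neither of your two routes can establish it as stated.

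\textbf{Route (i) is false throughout the range $n\ge(2s+1)k-s$.} The family $\mathcal A_{n,k,s,2}=\{F:|F\cap[2s+1]|\ge2\}$ is shifted and has $\nu=s$. Yet its $\mathcal F(\bar1)$ contains $s$ pairwise disjoint sets, one through each pair $\{2,3\},\ldots,\{2s,2s+1\}$. The clique $\binom{[(s+1)k-1]}{k}$ is another example. Your ``push down onto $1$'' step fails because nothing forces a $k$-set built from the free elements to lie in $\mathcal F$; in $\mathcal A_{n,k,s,2}$ none does. So induction on $s$ only handles the case $\nu(\mathcal F(\bar1))\le s-1$. The case $\nu(\mathcal F(\bar1))=s$, where all the competitors live, is left open.

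\textbf{Route (ii) does not close that case.} From $|\mathcal F(\bar1)|\le s|\partial\mathcal F(\bar1)|\le s|\mathcal F(1)|$ and $|\mathcal F(1)|\le\binom{n-1}{k-1}$, the best you get is $|\mathcal F|\le(s+1)\binom{n-1}{k-1}$. The target is $\binom nk-\binom{n-s}k=\sum_{i=1}^s\binom{n-i}{k-1}\le s\binom{n-1}{k-1}$. The trade-off is slackest when $\mathcal F(1)$ is full, so there is nothing to optimize. You could add the cross-dependence bound $|\mathcal F(1)|\le\binom{n-1}{k-1}-\binom{n-1-sk}{k-1}$, valid when $\nu(\mathcal F(\bar1))=s$. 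It still fails at the threshold: for $k=3$, $s=2$, $n=13$ it gives $3\cdot 51=153>121=\binom{13}{3}-\binom{11}{3}$.

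A quick diagnostic confirms that something essential is missing. Route (ii) uses nothing beyond $n\ge(s+1)k$, yet the conclusion fails for such $n$. For $k=s=2$ and $n=6$, the clique $\binom{[5]}{2}$ has $10>9=|\mathcal H_{6,2,2}|$ edges. A correct proof must therefore use $n\ge(2s+1)k-s$ at a specific step. It must also supply a further inequality that converts the shadow bound into the exact value $\binom nk-\binom{n-s}k$. Your ``main obstacle'' paragraph names this gap but does not fill it.
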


For a family $\mathcal F$, let $\tau(\mathcal F)$ be the minimum size of a set meeting every member of $\mathcal F$.  We shall also use the following Hilton--Milner-type matching theorem.

\begin{thm}[Frankl-Kupavskii \cite{frankl-kupavskii2019}]\label{F-K}
Let $k\geq3$ and
\[
n\geq \bigl(s+\max\{25,2s+2\}\bigr)k.
\]
If $\mathcal F\subseteq\binom{[n]}k$ satisfies $\nu(\mathcal F)=s$ and $\tau(\mathcal F)>s$, then
\[
|\mathcal F|\leq \binom nk-\binom{n-s}k+1-\binom{n-s-k}{k-1}.
\]
\end{thm}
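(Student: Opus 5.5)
The plan is a degree-based reduction: locate the high-degree vertices, show the sets avoiding them form a family with smaller matching number, and finish with the Hilton--Milner theorem in the critical case. Fix $\mathcal F$ with $\nu(\mathcal F)=s$ and $\tau(\mathcal F)>s$.

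\textbf{Step 1: high-degree vertices.} Set a threshold
\[
D=(ks+k+s)\binom{n-2}{k-2}.
\]
Let $T$ be the set of vertices whose degree in $\mathcal F$ exceeds $D$, and put $t=|T|$. Let $\mathcal F'=\{F\in\mathcal F:F\cap T=\emptyset\}$.

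First claim: $\nu(\mathcal F')\le s-t$. Given a matching of $s-t+1$ sets in $\mathcal F'$, its union $W$ has at most $k(s+1)$ vertices. Take the vertices of $T$ one at a time. The number of sets through a fixed $x\in T$ that also meet a fixed set of at most $k(s+1)+s$ other vertices is at most $(k(s+1)+s)\binom{n-2}{k-2}$. This is below $\deg(x)$, so we can pick a set through $x$ disjoint from $W$, from the other vertices of $T$, and from the sets already chosen. This yields a matching of size $s+1$, a contradiction.

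In particular $t\le s$. If $t=s$, then $\nu(\mathcal F')=0$, so $\mathcal F'=\emptyset$, meaning $T$ covers $\mathcal F$ and $\tau(\mathcal F)\le s$, a contradiction. Hence $t\le s-1$.

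\textbf{Step 2: the critical case $t=s-1$.} Now $\mathcal F'$ is an intersecting family on the ground set $[n]\setminus T$, which has size $n-s+1$.

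If $\mathcal F'$ were trivial with center $x$, then $T\cup\{x\}$ would cover $\mathcal F$, contradicting $\tau(\mathcal F)>s$. So $\mathcal F'$ is nontrivial, and Theorem~\ref{HM}, applied on $n-s+1\ge 2k$ points, gives
\[
|\mathcal F'|\le \binom{n-s}{k-1}-\binom{n-s-k}{k-1}+1 .
\]
The sets meeting $T$ number at most $\binom nk-\binom{n-s+1}{k}$. Adding the two bounds and using $\binom{n-s+1}{k}-\binom{n-s}{k-1}=\binom{n-s}{k}$ gives exactly the claimed bound.

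\textbf{Step 3: the case $t\le s-2$ (main obstacle).} Here the family must be shown to be strictly smaller than the bound. Every vertex of $[n]\setminus T$ has degree at most $D$, and $\mathcal F'$ is covered by the union of a maximal matching in it, so $|\mathcal F'|\le k(s-t)D$. It then suffices that
\[
k(s-t)D \le \binom{n-t}{k}-\binom{n-s}{k}-\binom{n-s-k}{k-1}+1,
\]
where the right-hand side is roughly $(s-t-1)\binom{n-s}{k-1}$ since $s-t\ge 2$.

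The difficulty is that $D/\binom{n}{k-1}\approx k^2 s/n$. This crude comparison therefore needs $n$ of order $k^2s$, whereas the statement only allows $n$ linear in $k$ for fixed $s$, roughly $n\ge(3s+2)k$ or $(s+25)k$.

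To close the gap I would first sharpen the degree counting in two ways. Measure degrees in the link restricted to $[n]\setminus T$. Run the argument iteratively: peel off one high-degree vertex at a time, and compare with the $\nu\le s-t$ problem on the residual ground set, using Theorem~\ref{result for EMC} for $\mathcal F'$ instead of the crude cover bound. This gives $|\mathcal F'|\le\binom{n-t}{k}-\binom{n-s}{k}$ minus a deficit coming from the low maximum degree. I also expect to use an averaging argument over the sets of a maximal matching, bounding $|\mathcal F'|$ by about $(s-t)\cdot\tfrac{\text{small}}{n}\binom{n}{k-1}$ with only linear-in-$k$ losses.

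Making these estimates effective down to the stated linear threshold is where I expect most of the work to lie. If that fails, I would settle for the theorem with a larger (quadratic in $k$) threshold on $n$.
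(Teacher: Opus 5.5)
You have a genuine gap, and it sits in Step~3, which is where the whole difficulty lies. The paper does not prove this statement; it quotes it from Frankl and Kupavskii, so your proposal has to stand on its own.

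\textbf{Steps 1 and 2.} These are correct, with two small omissions. If $t>s$, run the greedy step on $s+1$ vertices of $T$ starting from the empty matching. In Step~2, $\mathcal F'\neq\emptyset$, since otherwise $T$ would cover $\mathcal F$.

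\textbf{The case $t\geq1$ is not the problem.} For $t\geq1$ the family $\mathcal F'$ lives on the $n-t$ points of $[n]\setminus T$ and satisfies $1\leq\nu(\mathcal F')\leq s-t<\tau(\mathcal F')$. Induction on $s$, plus the at most $\binom nk-\binom{n-t}k$ sets meeting $T$, already gives exactly the claimed bound. Your Step~2 is the instance $t=s-1$. What is left is $t=0$, $s\geq2$: no vertex has degree above $D$.

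\textbf{Your method cannot handle $t=0$ in the stated range.} Note that $D\geq\binom{n-1}{k-1}$ exactly when $(k-1)(ks+k+s)\geq n-1$. At $n=(3s+2)k$, which is the stated threshold once $s\geq12$, this holds for every $k\geq4$. Then $T=\emptyset$ for \emph{every} family, Step~1 says nothing, and the cover bound $|\mathcal F|\leq ksD$ exceeds $\binom nk$.

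Two factors of $k$ are lost. The union bound in the greedy step loses one in the degree threshold. Covering $\mathcal F'$ by the $k(s-t)$ vertices of a maximal matching loses another. So the crude comparison in Step~3 needs $n$ of order $k^3s$, not $k^2s$ as you estimate. This is essentially the classical high-degree approach of Bollob\'as, Daykin and Erd\H{o}s, and a cubic range is what it naturally gives.

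\textbf{Your refinements do not supply the missing idea.} Theorem~\ref{result for EMC} applied to $\mathcal F'$ gives only $|\mathcal F'|\leq\binom{n-t}k-\binom{n-s}k$. That yields $|\mathcal F|\leq\binom nk-\binom{n-s}k$, with none of the required deficit $\binom{n-s-k}{k-1}-1$. The deficit has to be extracted in the case $t=0$, for a family with $\nu=s$ and no vertex of large degree, without paying factors of $k$. That is precisely the content of the Frankl--Kupavskii theorem, and it needs tools other than greedy extension with union bounds.

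\textbf{What your argument does give.} The paper uses this theorem only in Lemma~\ref{lemma sta}, with $k$ and $s$ fixed and $n$ sufficiently large. For that purpose your argument, completed by the crude Step~3 estimate, would suffice. Step~3 goes through once $2k(ks+k+s)\binom{n-2}{k-2}\leq\binom{n-s}{k-1}$, for example for $n\geq13k^3s$. It does not, however, prove the theorem in the stated linear range.
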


For $k$-uniform hypergraphs $\mathcal G$ and $\mathcal F$, let $N(\mathcal G,\mathcal H)$ be the number of unlabelled copies of $\mathcal G$ in $\mathcal H$, and define
\[
\operatorname{ex}_k(n,\mathcal G,\mathcal F)
:=\max\left\{N(\mathcal G,\mathcal H):\mathcal H\subseteq\binom{[n]}k
\text{ is $\mathcal F$-free}\right\}.
\]
Since an $M_{s+1}$-free family has matching number at most $s$, the Erd\H{o}s Matching Conjecture concerns $\operatorname{ex}_k(n,M_{s+1})$.  We study the analogous counting problem.  A $k$-uniform \emph{sunflower} with $l$ petals is a collection of distinct sets $\{A_1,\ldots,A_l\}$ for which there is a set $C$, called the \emph{core}, such that $A_i\cap A_j=C$ whenever $i\neq j$.  We write $S_{k,l}^{k-1}$ when $|C|=k-1$.  In particular,
\[
N(S_{k,l}^{k-1},\mathcal H)
=\sum_{E\in\binom{[n]}{k-1}}\binom{d_{\mathcal H}(E)}l.
\]

\begin{thm}\label{main1}
    Let $k,s,l$ be positive integers with $s\geq 1$ and $k,l\geq 2$.  There exists $n_0=n_0(k,s,l)$ such that, whenever $n\geq n_0$ and $\mathcal{H}\subseteq \binom{[n]}{k}$ satisfies $\nu(\mathcal{H})\leq s$,
    $$N(S_{k,l}^{k-1},\mathcal{H})\leq N(S_{k,l}^{k-1},\mathcal{H}_{n,k,s})$$
    with equality if and only if $\mathcal{H}\cong \mathcal{H}_{n,k,s}$.
\end{thm}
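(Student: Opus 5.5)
We split according to the cover number $\tau(\mathcal H)$, and use Theorem~\ref{F-K} to show that covers of size exceeding $s$ lose too many edges. Throughout we may assume $\nu(\mathcal H)=s$ exactly. Indeed, adding edges never decreases $N(S_{k,l}^{k-1},\cdot)$, so a maximizer can be enlarged until its matching number equals $s$. The equality case is handled separately at the end.

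\textbf{Case $\tau(\mathcal H)\le s$.} Let $T$ be a cover with $|T|\le s$. Then $\mathcal H\subseteq\{F:F\cap T\neq\emptyset\}$, and the latter family is isomorphic to a subfamily of $\mathcal H_{n,k,s}$. Codegrees are monotone in the family and $\binom{x}{l}$ is nondecreasing, so $N(S_{k,l}^{k-1},\mathcal H)\le N(S_{k,l}^{k-1},\mathcal H_{n,k,s})$. For equality, suppose $\mathcal H\subsetneq\mathcal H_{T}$, where $\mathcal H_T$ is $\mathcal H_{n,k,s}$ built on $T$. Pick a missing edge $F$. Some $(k-1)$-subset $E\subseteq F$ meets $T$; it has $d_{\mathcal H_T}(E)=n-k+1\ge l$. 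Removing $F$ strictly lowers $\binom{d(E)}{l}$, so $\mathcal H=\mathcal H_T$ with $|T|=s$.

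\textbf{Case $\tau(\mathcal H)>s$.} By Theorem~\ref{F-K}, for $n$ large,
\[
|\mathcal H|\le \binom nk-\binom{n-s}k+1-\binom{n-s-k}{k-1},
\]
so $\mathcal H$ misses at least roughly $\tfrac12\binom{n}{k-1}$ edges compared with the $\mathcal H_{n,k,s}$ count. We need to turn this size deficit into a deficit in $\sum_E\binom{d(E)}{l}$. Call a $(k-1)$-set $E$ \emph{heavy} if $d_{\mathcal H}(E)\ge ks+1$ (the high-codegree shadow), and light otherwise.

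\emph{Light sets.} Each contributes at most $\binom{ks}{l}=O(1)$. Their number is at most $\binom{n}{k-1}$, so together they contribute $O(n^{k-1})$.

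\emph{Heavy sets.} The heavy sets have matching number at most $s$: a greedy argument extends $s+1$ disjoint heavy $(k-1)$-sets to $s+1$ disjoint edges, since each has more than $ks$ choices. Hence the family $\partial_h$ of heavy sets, which is $(k-1)$-uniform, satisfies $|\partial_h|\le\binom{n}{k-1}-\binom{n-s}{k-1}=O(n^{k-2})$ by Theorem~\ref{result for EMC}. Each heavy set contributes at most $\binom{n-k+1}{l}$.

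Now compare with $\mathcal H_{n,k,s}$. There the $(k-1)$-sets meeting $[s]$ number about $s\binom{n}{k-2}$ and each has codegree $n-k+1$; the rest have codegree $s$. The key quantity is
\[
\sum_{E\ \text{heavy}}\binom{d(E)}{l}.
\]
Using convexity of $x\mapsto\binom{x}{l}$ with the cap $d(E)\le n-k+1$, this sum is at most roughly $\frac{\sum_{E\ \text{heavy}}d(E)}{n}\binom{n}{l}$. Also $\sum_E d(E)=k|\mathcal H|$, and in $\mathcal H_{n,k,s}$ essentially all of this degree mass, up to $O(n^{k-1})$, sits on heavy sets. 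The deficit of order $n^{k-1}$ in $|\mathcal H|$ therefore costs about $n^{k-1}\cdot n^{l-1}$ in the heavy sum. This beats both the $O(n^{k-1})$ light contribution and the lower-order error terms when $l\ge2$. The deficit in $\mathcal H_{n,k,s}$ from light sets is $\binom{n-s}{k-1}\binom{s}{l}=O(n^{k-1})$, which is also dominated since $n^{k+l-2}\gg n^{k-1}$. So the inequality is strict in this case.

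\textbf{Main obstacle.} The delicate step is the convex moment estimate. We must show that the degree mass on heavy sets is at most $k|\mathcal H|-O(n^{k-1})$-like quantities in a way that retains a loss of order $n^{k-1}$ relative to $\mathcal H_{n,k,s}$. The difficulty is that a naive bound lets light sets absorb degree mass. I would handle this by writing
\[
\sum_{E\ \text{heavy}}d(E)\le k|\mathcal H|\le k\bigl(|\mathcal H_{n,k,s}|-\tfrac12\binom{n}{k-1}\bigr).
\]
Then I would maximize $\sum_E\binom{d_E}{l}$ subject to $0\le d_E\le n-k+1$ and a fixed total. Convexity pushes the maximum to the extreme configuration: as many sets at full codegree as possible. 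Comparing this with $\mathcal H_{n,k,s}$ gives a loss of order $n^{k+l-2}$, which dominates all the $O(n^{k-1})$ and $O(n^{k+l-3})$ error terms for $n\ge n_0(k,s,l)$.
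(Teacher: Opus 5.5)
Your case $\tau(\mathcal H)\le s$ and your uniqueness argument are fine. The case $\tau(\mathcal H)>s$, however, has a genuine gap. The convexity step uses only $\sum_{E\text{ heavy}}d_{\mathcal H}(E)\le k|\mathcal H|$, and this constraint is too weak. Theorem~\ref{F-K} gives $|\mathcal H|\le\bigl(\frac{s-1}{(k-1)!}+o(1)\bigr)n^{k-1}$, so your estimate yields at best
\[
\sum_{E\text{ heavy}}\binom{d_{\mathcal H}(E)}{l}\le\frac{k|\mathcal H|}{n-k+1}\binom{n-k+1}{l}=\Bigl(\frac{k(s-1)}{(k-1)!\,l!}+o(1)\Bigr)n^{k+l-2},
\]
whereas $N(S_{k,l}^{k-1},\mathcal H_{n,k,s})=\bigl(\frac{(k-1)s}{(k-1)!\,l!}+o(1)\bigr)n^{k+l-2}$. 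Since $k(s-1)-(k-1)s=s-k$, the argument proves nothing once $s\ge k$; with your weaker deficit $\frac12\binom n{k-1}$ it already fails for $s\ge k/2$.

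The false step is your assertion that in $\mathcal H_{n,k,s}$ essentially all degree mass sits on heavy sets. The light sets there carry mass $s\binom{n-s}{k-1}$. That is roughly a $1/k$ fraction of the total $k|\mathcal H_{n,k,s}|$, and it is of the same order as the Frankl--Kupavskii deficit $k\bigl(\binom{n-s-k}{k-1}-1\bigr)$. So for $s\ge k$ your constraints do not rule out a competitor that places all its degree mass on heavy sets and matches or beats $\mathcal H_{n,k,s}$. Separately, Theorem~\ref{F-K} is stated only for $k\ge3$. For $k=2$ you need a separate bound, such as $e(G)\le(s-1)|V(G)|+O_s(1)$ for graphs with $\nu(G)\le s<\tau(G)$.

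What is missing is control over how \emph{many} heavy sets there can be, not just their total mass. The paper gets this by applying the stability form of Theorem~\ref{F-K} to the $(k-1)$-uniform shadow $\mathcal K=\mathcal K_{sk+1}(\mathcal H)$ rather than to $\mathcal H$. The heavy sum is at most $|\mathcal K|\binom{n}{l}$ and the light sum is $O(n^{k-1})$, so near-extremality forces $|\mathcal K|\ge\bigl(\frac{s-1}{(k-2)!}+\epsilon\bigr)n^{k-2}$, hence $\tau(\mathcal K)\le s$. A lifting argument then gives $\tau(\mathcal H)\le s$: an edge $G_0$ avoiding the cover would leave a family $\mathcal H'$ whose $((s-1)k+1)$-heavy shadow is too large to have matching number below $s$.

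Alternatively, your own route can be repaired. Let $T$ be the vertex set of a maximal matching in $\mathcal K$. Then $|T|\le(k-1)s$ and every heavy set meets $T$. So an edge all of whose $(k-1)$-subsets are heavy meets $T$ in at least two points, and there are only $O(n^{k-2})$ such edges. Hence $\sum_{E\text{ heavy}}d_{\mathcal H}(E)\le(k-1)|\mathcal H|+O(n^{k-2})$. With this sharper mass bound, your convexity estimate gives $\bigl(\frac{s-1}{(k-2)!\,l!}+o(1)\bigr)n^{k+l-2}$, which is below the target by order $n^{k+l-2}$.
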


For $E\in\binom{[n]}t$, where $1\leq t<k$, define
\[
d_{\mathcal H}(E):=|\{F\in\mathcal H:E\subseteq F\}|.
\]
For a positive integer $p$, put
\[
co_p(\mathcal H):=\sum_{E\in\binom{[n]}{k-1}}d_{\mathcal H}(E)^p.
\]
Thus $co_1(\mathcal H)=k|\mathcal H|$ and, with the usual unlabelled-copy convention,
\[
co_2(\mathcal H)=2N(S_{k,2}^{k-1},\mathcal H)+k|\mathcal H|.
\]
Balogh, Clemen and Lidick\'y \cite{balogh2022} initiated the systematic study of hypergraph Tur\'an problems for degree moments.  Brooks and Linz \cite{brooks2023some} determined the maximum of $co_2$ under a matching-number constraint; our next result treats every positive integral moment.

\begin{thm}[Brooks--Linz \cite{brooks2023some}]
    Let $n,k,s$ be positive integers with $s\geq1$ and $k\geq2$, and let $\mathcal H\subseteq\binom{[n]}k$ satisfy $\nu(\mathcal H)\leq s$.  There exists an integer $n_0$ such that, for $n\geq n_0$,
    \[
    co_2(\mathcal H)\leq co_2(\mathcal H_{n,k,s}),
    \]
    with equality if and only if $\mathcal H\cong\mathcal H_{n,k,s}$.
\end{thm}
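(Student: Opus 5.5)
The plan is a stability argument based on Theorem \ref{F-K}, splitting according to whether $\tau(\mathcal H)\le s$ or $\tau(\mathcal H)>s$. We may assume $\nu(\mathcal H)=s$: adding edges does not decrease codegrees, and a maximal family with $\nu\le s$ and $\nu<s$ can be handled by induction on $s$, since $co_2(\mathcal H_{n,k,s-1})<co_2(\mathcal H_{n,k,s})$ by a margin of order $n^{k}$.

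\emph{Case $\tau(\mathcal H)\le s$.} Let $T$ be a cover with $|T|\le s$. Then $\mathcal H$ is contained in $\{F:F\cap T\ne\emptyset\}$, which is isomorphic to a subfamily of $\mathcal H_{n,k,s}$. Codegrees are monotone under inclusion, so $co_2(\mathcal H)\le co_2(\mathcal H_{n,k,s})$. If $\mathcal H$ is a proper subfamily, then some codegree drops strictly, so $d^2$ decreases strictly. Hence equality forces $\mathcal H\cong\mathcal H_{n,k,s}$.

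\emph{Case $\tau(\mathcal H)>s$.} The aim is to show $co_2(\mathcal H)<co_2(\mathcal H_{n,k,s})$ strictly. Write $co_2=\sum_E d(E)^2$ and split the $(k-1)$-sets into two classes. A set $E$ is \emph{heavy} if $d_{\mathcal H}(E)>ks$ and \emph{light} otherwise.

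\begin{itemize}
\item Light sets contribute at most $ks\cdot k|\mathcal H|$ in total.
\item For heavy sets, take a maximal matching. The key claim is that the heavy shadow $\mathcal E$ satisfies $\nu(\mathcal E)\le s$, where $\mathcal E$ is viewed as a $(k-1)$-graph. Indeed, given $s+1$ disjoint heavy sets, one can extend them greedily to $s+1$ disjoint edges of $\mathcal H$, since each has more than $ks$ extensions.
\end{itemize}

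From the claim, $|\mathcal E|=O(n^{k-2})$. We then bound the heavy contribution by
$\sum_{E\in\mathcal E}d(E)^2\le (n-k+1)\sum_{E\in\mathcal E}d(E)$.

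Compare this with $\mathcal H_{n,k,s}$. There, the sets meeting $[s]$ have degree $n-k+1$, and they number $\binom{n}{k-1}-\binom{n-s}{k-1}\approx s\binom{n}{k-2}$. The sets avoiding $[s]$ have degree $s$. Thus $co_2(\mathcal H_{n,k,s})\approx (n-k+1)\cdot k|\mathcal H_{n,k,s}|$ up to lower order terms.

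For $\mathcal H$, Theorem \ref{F-K} gives $|\mathcal H|\le|\mathcal H_{n,k,s}|-\Omega(n^{k-1})$. Using the convex estimate above, we get
\[
co_2(\mathcal H)\le (n-k+1)\,k|\mathcal H|+O(n^{k-1})\le co_2(\mathcal H_{n,k,s})-\Omega(n^{k})+O(n^{k-1}).
\]
This is the required strict inequality for large $n$. The step needing care is that the error terms in $co_2(\mathcal H_{n,k,s})$ relative to $(n-k+1)k|\mathcal H_{n,k,s}|$ are only $O(n^{k-1})$. This holds because the light sets there carry degree $s$ on about $\binom{n}{k-1}$ sets, giving $O(n^{k-1})$, and the heavy ones are exact.

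The main obstacle is this bookkeeping. One must check that the deficit $\binom{n-s-k}{k-1}\cdot(n-k+1)\cdot k$, of order $n^{k}$, strictly dominates every $O(n^{k-1})$ error term. The light-set bound $k^2s|\mathcal H|$ is only $O(n^{k})$ with constant $k^2s\cdot s/(k-1)!$, so it is not negligible as written. I would instead bound it as $\sum_{\text{light}}d^2\le ks\sum_{\text{light}}d$. I would then note that in $\mathcal H$ the total weight $\sum d=k|\mathcal H|$ split between heavy and light sets only makes things worse than putting all the weight on heavy sets at multiplier $n-k+1\gg ks$. This reduces everything to $co_2(\mathcal H)\le (n-k+1)k|\mathcal H|$, while $co_2(\mathcal H_{n,k,s})\ge (n-k+1)\bigl(k|\mathcal H_{n,k,s}|-s\binom{n}{k-1}\bigr)$, so the loss is $O(n^{k-1})$ as needed.
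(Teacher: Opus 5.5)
There is a genuine gap in the case $\tau(\mathcal H)>s$. The error you call $O(n^{k-1})$ is really of order $n^k$, and the Frankl--Kupavskii deficit does not dominate it.

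Your lower bound $co_2(\mathcal H_{n,k,s})\ge(n-k+1)\bigl(k|\mathcal H_{n,k,s}|-s\binom{n}{k-1}\bigr)$ falls short of $(n-k+1)k|\mathcal H_{n,k,s}|$ by $(n-k+1)s\binom{n}{k-1}$, not by $s\binom{n}{k-1}$. Exactly,
\[
(n-k+1)\,k|\mathcal H_{n,k,s}|-co_2(\mathcal H_{n,k,s})=s(n-k+1-s)\binom{n-s}{k-1}\sim\frac{s\,n^k}{(k-1)!}.
\]
The reason is that a typical edge of $\mathcal H_{n,k,s}$ has one $(k-1)$-subset, the one avoiding $[s]$, of codegree only $s$. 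Hence $co_2(\mathcal H_{n,k,s})\sim(k-1)(n-k+1)|\mathcal H_{n,k,s}|$, not $k(n-k+1)|\mathcal H_{n,k,s}|$.

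Theorem~\ref{F-K} gives $|\mathcal H|\lesssim(s-1)n^{k-1}/(k-1)!$. Your chain therefore yields $co_2(\mathcal H)\le(n-k+1)k|\mathcal H|\lesssim k(s-1)n^k/(k-1)!$, while $co_2(\mathcal H_{n,k,s})\sim(k-1)s\,n^k/(k-1)!$. The required inequality $k(s-1)<(k-1)s$ holds only when $s<k$. For every $s\ge k$ the argument proves nothing, and for $k=2$ Theorem~\ref{F-K} is not even available.

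A better bound on $|\mathcal H|$ cannot fix this, because Hilton--Milner-type families attain the Frankl--Kupavskii bound. The loss comes from converting $|\mathcal H|$ into $co_2$ through $\sum_{E\in\mathcal E}d(E)\le k|\mathcal H|$. Incidentally, the light-set term $k^2s|\mathcal H|$ that you worried about is only $O(n^{k-1})$.

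The missing idea is to make the heavy shadow carry the argument, instead of discarding it once you have shown $\nu(\mathcal E)\le s$. Since $co_2(\mathcal H)\le(n-k+1)^2|\mathcal E|+(sk)^2\binom{n}{k-1}$, what you need is a stability statement for the $(k-1)$-uniform shadow: $\tau(\mathcal H)>s$ must force $|\mathcal E|\le\bigl(\frac{s-1}{(k-2)!}+\epsilon\bigr)n^{k-2}$. Using only $\nu(\mathcal E)\le s$ together with the matching bound gives the correct leading term but no strict inequality.

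This is exactly the paper's route, which obtains the statement as the case $p=2$ of Theorem~\ref{main2}:
\begin{enumerate}
\item Lemma~\ref{lem1} is your greedy extension claim.
\item Lemma~\ref{lemma sta} applies Frankl--Kupavskii to $\mathcal E=\mathcal K_{sk+1}(\mathcal H)$ itself, with a separate graph estimate when $k=3$, to get $\tau(\mathcal E)\le s$ whenever $\mathcal E$ is large.
\item Lemma~\ref{lem3} lifts this cover to $\mathcal H$. It discards the edges meeting an uncovered edge $G_0$ and finds an $s$-matching in what remains.
\end{enumerate}
Your treatment of the case $\tau(\mathcal H)\le s$, including the equality analysis there, is fine and agrees with the paper.
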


\begin{thm}\label{main2}
    Let $k,s,p$ be positive integers with $s\geq1$.  There exists $n_0=n_0(k,s,p)$ such that, whenever $n\geq n_0$ and $\mathcal H\subseteq\binom{[n]}k$ satisfies $\nu(\mathcal H)\leq s$,
    \[
    co_p(\mathcal H)\leq co_p(\mathcal H_{n,k,s}),
    \]
    with equality if and only if $\mathcal H\cong\mathcal H_{n,k,s}$.
\end{thm}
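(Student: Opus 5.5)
We split according to whether $\tau(\mathcal H)\le s$ or not. We may assume $\nu(\mathcal H)=s$: if $\nu(\mathcal H)<s$ we can add edges without creating an $(s+1)$-matching, and adding edges only increases $co_p$. If $\nu(\mathcal H)=s$ and $\tau(\mathcal H)\le s$, then $\mathcal H$ is covered by some $s$-set $T$. Hence $\mathcal H\subseteq\{F:F\cap T\ne\emptyset\}\cong\mathcal H_{n,k,s}$. Since $co_p$ is strictly monotone under adding edges (each new edge raises $k$ codegrees), we get $co_p(\mathcal H)\le co_p(\mathcal H_{n,k,s})$, with equality only if $\mathcal H=\{F:F\cap T\neq\emptyset\}$. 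So the whole work is the case $\nu(\mathcal H)=s$, $\tau(\mathcal H)>s$. For $n\ge(s+\max\{25,2s+2\})k$ and $k\ge3$, Theorem~\ref{F-K} gives
\[
|\mathcal H|\le|\mathcal H_{n,k,s}|-\binom{n-s-k}{k-1}+1 .
\]
The deficit here is $\Theta(n^{k-1})$. For $k=2$ one needs a separate, easy argument, e.g.\ via the Gallai/Erd\H{o}s--Gallai structure for graphs with $\nu\le s$.

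Next we compare codegree profiles. In $\mathcal H_{n,k,s}$, a $(k-1)$-set $E$ has codegree about $n$ if $E\cap[s]\ne\emptyset$, and codegree $s$ otherwise. So $co_p(\mathcal H_{n,k,s})=\bigl(\binom{n-1}{k-2}s+O(n^{k-3})\bigr)\cdot n^p(1+O(1/n))$, dominated by the roughly $s\binom{n}{k-2}$ sets with codegree of order $n$.

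For $\mathcal H$, fix a threshold $D$, a large constant such as $D=ks+k$, and call $E$ \emph{heavy} if $d_{\mathcal H}(E)\ge D$. This is the high-codegree-shadow reduction. Light sets contribute at most $D^{p-1}\sum_E d(E)=O(D^{p-1}k|\mathcal H|)=O(n^{k-1})$, which is a lower-order term compared with $n^{k-2+p}$ once $p\ge2$. For heavy sets, the key claim is that the family $\mathcal E$ of heavy $(k-1)$-sets has $\nu(\mathcal E)\le s$. Indeed, heavy sets in a matching can be greedily extended to disjoint edges, since each has at least $D$ extensions and only $O(ks)$ vertices are blocked. Moreover, if $\tau(\mathcal E)\le s$, any heavy sets avoiding a cover $T$ combine with $\mathcal H$ to force $\tau(\mathcal H)\le s$ or a large matching. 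Thus, when $\tau(\mathcal H)>s$, either $\mathcal E$ has a cover of size at most $s$ with a deficient structure, or $\nu(\mathcal E)\le s$ with $\tau(\mathcal E)>s$ (or fewer than $s$ effective cover vertices).

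In both situations we aim to show that $|\mathcal E|\le s\binom{n}{k-2}-c\,n^{k-2}$ for some $c>0$, or that the sum of heavy codegrees is at most $k|\mathcal H|$ minus a deficit of order $n^{k-1}$. For the moment we use convexity: $\sum_{E\in\mathcal E}d(E)^p\le \max_E d(E)^{p-1}\sum_{E} d(E)$ with $d(E)\le n-k+1$. This gives
\[
co_p(\mathcal H)\le (n-k+1)^{p-1}\Bigl(k|\mathcal H|\Bigr)+O(n^{k-1})\cdot D^{p-1}.
\]
A refinement is needed: split $k|\mathcal H|=\sum_E d(E)$ and use the Frankl--Kupavskii deficit $\binom{n-s-k}{k-1}$. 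That deficit multiplies $n^{p-1}$ to give a loss of order $n^{k+p-2}$. On the other side, the error terms are $O(n^{k+p-3})$, coming from the gap between the exact $co_p(\mathcal H_{n,k,s})$ and $(n-k+1)^{p-1}k|\mathcal H_{n,k,s}|$, plus the light-set contribution. Hence for large $n$ we get $co_p(\mathcal H)<co_p(\mathcal H_{n,k,s})$ strictly, which also settles uniqueness.

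The main obstacle is exactly this comparison: $(n-k+1)^{p-1}k|\mathcal H_{n,k,s}|$ exceeds $co_p(\mathcal H_{n,k,s})$ by a quantity of order $n^{k+p-2}$ as well, because edges meeting $[s]$ in one vertex also contribute $k-1$ codegrees of value $s$. So the crude convex bound must be sharpened. Our plan is to only charge heavy sets at $n^{p-1}$, and to use that each edge $F$ of $\mathcal H$ has at most about $|F\cap\bigcup\mathcal E\text{-structure}|$ heavy $(k-1)$-subsets. In $\mathcal H_{n,k,s}$ typically exactly one subset of each edge avoids $[s]$ and is light, while $k-1$ of its subsets are heavy. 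We would then bound the number of edge--heavy-subset incidences by $(k-1)|\mathcal H|+O(n^{k-2})$, and combine this with Theorem~\ref{F-K} to recover the required $\Theta(n^{k+p-2})$ gap. The case $p=1$ is just Theorem~\ref{result for EMC} together with the uniqueness coming from Theorem~\ref{F-K}.
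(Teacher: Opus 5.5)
Your closing plan works and is a different route from the paper's, but the proposal does not carry it out. The step left open is the one that does all the work when $\nu(\mathcal H)=s<\tau(\mathcal H)$.

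The paragraph ending ``hence for large $n$ we get $co_p(\mathcal H)<co_p(\mathcal H_{n,k,s})$ strictly'' is false as written, as you then notice. The bound $(n-k+1)^{p-1}k|\mathcal H|$ is about $\frac{k(s-1)}{(k-1)!}n^{k+p-2}$, while $co_p(\mathcal H_{n,k,s})\approx\frac{(k-1)s}{(k-1)!}n^{k+p-2}$, so that argument only works for $s<k$.

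The repair, that heavy incidences number at most $(k-1)|\mathcal H|+O(n^{k-2})$, is only announced. It can be proved as follows.
\begin{enumerate}[label=(\roman*)]
\item Since $D\ge sk+1$, your greedy extension gives $\nu(\mathcal E)\le s$. So the union $T$ of a maximal matching in $\mathcal E$ has $|T|\le(k-1)s$ and meets every heavy set.
\item Suppose all $k$ of the $(k-1)$-subsets of an edge $F$ are heavy. If $F\cap T\subseteq\{x\}$ for some $x\in F$, the heavy set $F\setminus\{x\}$ would miss $T$. Hence $|F\cap T|\ge2$, so there are at most $\binom{(k-1)s}{2}\binom{n-2}{k-2}$ such edges. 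Every other edge has at most $k-1$ heavy subsets.
\item On the extremal side you need an exact inequality, not an asymptotic one. From $k\binom nk=(n-k+1)\binom n{k-1}$ one gets
\[
(n-k+1)\Bigl(\binom n{k-1}-\binom{n-s}{k-1}\Bigr)-(k-1)\Bigl(\binom nk-\binom{n-s}k\Bigr)=\binom nk-\binom{n-s}k-s\binom{n-s}{k-1}\ge0,
\]
so $co_p(\mathcal H_{n,k,s})\ge(k-1)(n-k+1)^{p-1}|\mathcal H_{n,k,s}|$.
\item Combining this with Theorem~\ref{F-K} gives
\[
co_p(\mathcal H)\le co_p(\mathcal H_{n,k,s})-(k-1)(n-k+1)^{p-1}\Bigl(\binom{n-s-k}{k-1}-1\Bigr)+(n-k+1)^{p-1}\binom{(k-1)s}{2}\binom{n-2}{k-2}+D^{p-1}k|\mathcal H|.
\]
For $p\ge2$ the subtracted term, of order $n^{k+p-2}$, beats the last two, which are $O(n^{k+p-3})$.
\end{enumerate}

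Second, Theorem~\ref{F-K} requires $k\ge3$. So for $k=2$ your text leaves open both the case $p\ge2$ and the equality case for $p=1$; a pointer to the Gallai/Erd\H{o}s--Gallai structure is not a proof. One substitute is the estimate $e(G)\le(s-1)|V(G)|+\binom{4s^2+2s}{2}$ for $\nu(G)\le s<\tau(G)$, proved inside Lemma~\ref{lemma sta}. For $p\ge2$ it is simpler still: if $\tau(G)>s$, then at most $s-1$ vertices have degree at least $2s+1$. Indeed, given $s$ such vertices and an edge avoiding them, you can match them greedily outside that edge and get an $(s+1)$-matching. Hence $co_p(G)\le(s-1)(n-1)^p+O(n)$.

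Once completed, your proof differs genuinely from the paper's.
\begin{enumerate}[label=(\alph*)]
\item The paper never applies Theorem~\ref{F-K} to $\mathcal H$. It uses only $co_p(\mathcal H)\le|\mathcal K_{sk+1}(\mathcal H)|n^p+O(n^{k-1})$ to force the heavy shadow above $(\frac{s-1}{(k-2)!}+\epsilon)n^{k-2}$. It then applies Lemma~\ref{lemma sta} to that $(k-1)$-uniform shadow, so Frankl--Kupavskii (or the graph estimate when $k=3$) enters only in uniformity $k-1$. Finally it lifts the resulting $s$-cover back to $\mathcal H$ via Lemma~\ref{lem3}.
\item In the paper's route $k=2$ needs no separate treatment, since the shadow is then at most $s$ singletons.
\item You instead prove, for every $\mathcal H$ with $\nu(\mathcal H)\le s$ and $p\ge2$, the inequality $co_p(\mathcal H)\le(k-1)(n-k+1)^{p-1}|\mathcal H|+O(n^{k+p-3})$. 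You then apply Frankl--Kupavskii to $\mathcal H$ itself. This avoids the lifting lemma and is shorter for the exact result.
\item For $k\ge3$, using Theorem~\ref{result for EMC} when $\nu(\mathcal H)<s$, your route also reproduces the stability threshold $\frac{s-1}{(k-2)!}n^{k+p-2}$ of Theorem~\ref{sta main2}. The price is the separate graph case.
\end{enumerate}
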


When $p=1$, the upper bound in Theorem~\ref{main2} follows from
Theorem~\ref{result for EMC}; the equality case will follow from the
stability argument in Section~\ref{section 2}.

Section~\ref{section 2} proves stability forms of Theorems~\ref{main1} and~\ref{main2}.  The resulting threshold $n_0(k,s,p)$ is qualitative.  For $k=3$, the next theorem makes it explicit and uniform in $p$: in particular, $n_0(3,s,p)\leq6s+3$.
\begin{thm}\label{th k=3}
    Let $n,s,p$ be positive integers, and let $\mathcal H\subseteq\binom{[n]}3$ satisfy $\nu(\mathcal H)\leq s$.  If $n\geq6s+3$, then
    \[
    co_p(\mathcal H)\leq co_p(\mathcal H_{n,3,s}),
    \]
    with equality if and only if $\mathcal H\cong\mathcal H_{n,3,s}$.
\end{thm}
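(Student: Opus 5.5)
We plan to prove Theorem~\ref{th k=3} by splitting according to whether the family is already contained in a structure controlled by few vertices. Throughout, let $\mathcal H\subseteq\binom{[n]}3$ with $\nu(\mathcal H)\le s$ and $n\ge 6s+3$; we may assume $\mathcal H$ is maximal, i.e.\ adding any triple creates a matching of size $s+1$. First we compute the target value. In $\mathcal H_{n,3,s}$, a pair $E$ has codegree $n-2$ if $E\cap[s]\neq\emptyset$ and codegree $s$ otherwise. Hence
\[
co_p(\mathcal H_{n,3,s})=\Bigl(\binom n2-\binom{n-s}2\Bigr)(n-2)^p+\binom{n-s}2 s^p .
\]
The guiding principle is that codegrees are at most $n-2$. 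Therefore $co_p$ is a convex function of the codegree sequence, and this sequence is majorized as soon as the total $\sum_E d(E)=3|\mathcal H|$ is bounded and "many" pairs are forced to have small codegree.

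Next comes the high-codegree-shadow reduction. Call a pair $E$ \emph{heavy} if $d_{\mathcal H}(E)\ge 2s+1$, and let $\mathcal G$ be the graph of heavy pairs. If $\mathcal H$ contains a matching $M$ of size $s$, then every edge of $\mathcal H$ meets $V(M)$, a set of $3s$ vertices. Moreover, a heavy pair can be greedily extended when one rebuilds matchings. The standard argument then shows that $\nu(\mathcal G)\le s$, since $s+1$ disjoint heavy pairs extend greedily to $s+1$ disjoint triples because each has at least $2s+1$ completions. This requires $2s+1>2s$ blocked vertices, which works. A matching-number-$s$ graph on $n$ vertices has at most $\max\{\binom{2s+1}2,\ s(n-s)+\binom s2\}$ edges (Erdős–Gallai). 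For $n\ge 6s+3$ the second term dominates, with equality only for the star structure on $s$ centers. This is exactly the heavy set $\{E:E\cap[s]\ne\emptyset\}$ of $\mathcal H_{n,3,s}$.

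For the moment estimate, write $h=|\mathcal G|\le \binom n2-\binom{n-s}2=:h_0$. Each heavy pair contributes at most $(n-2)^p$, and each light pair has $d\le 2s$. We also use $\sum_E d(E)=3|\mathcal H|\le 3(\binom n3-\binom{n-s}3)$ by Theorem~\ref{EMC k=3}, which applies since $n\ge 6s+3\ge 3s+2$ and the first term dominates there. Light pairs need a finer bound than $2s$; the target has light codegree $s$. The plan is to bound $\sum_{E\text{ light}}d(E)^p$ by convexity: given the total light sum $T=3|\mathcal H|-\sum_{\text{heavy}}d(E)$, and each term at most $2s$, the $p$-th power sum is at most $T\cdot(2s)^{p-1}$. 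This is too lossy against $s^p$. So the main obstacle is the light pairs. What we would try first is to use the structure: when $h$ is close to $h_0$, the heavy graph $\mathcal G$ contains $s$ vertices $x_1,\dots,x_s$ of huge degree. Every triple avoiding them then lies within a set whose matching number is forced to be $0$ beyond the $s$ stars, so light pairs disjoint from $\{x_i\}$ have codegree exactly at most $s$ (each completion must be some $x_i$). When $h$ is much smaller than $h_0$, each missing heavy pair loses about $(n-2)^p-(2s)^p$. This loss beats any gain from light pairs, because light pairs number $O(n^2)$ but the light gain per pair is $O(s^p)$. We would need the count of "excess" light pairs to be $O(s\cdot(h_0-h))$, which should come from the shifting/structure of $\mathcal G$.

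Finally, equality. Equality forces $h=h_0$, which gives the Erdős–Gallai extremal structure, a full $s$-center star in $\mathcal G$. It also forces every light pair to have codegree exactly $s$, so every triple meeting $[s]$ is present. Then $\nu\le s$ forbids any triple disjoint from $[s]$, giving $\mathcal H\cong\mathcal H_{n,3,s}$. The delicate step, which we expect to consume most effort, is making the "loss beats gain" comparison effective all the way down to $n=6s+3$ rather than for large $n$. Here we would compare term-by-term using $(n-2)^p-(2s)^p\ge (n-2-2s)\cdot p(2s)^{p-1}$ together with $n-2-2s\ge 4s+1$.
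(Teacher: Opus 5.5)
There are two problems: a local error, and a missing central argument.

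\textbf{The heavy-pair threshold.} The threshold $2s+1$ is wrong, and the claim $\nu(\mathcal G)\le s$ fails with it. In the greedy extension of $s+1$ disjoint heavy pairs, the last pair must avoid all $3s$ vertices of the $s$ triples already chosen, not $2s$ vertices. So you need codegree at least $3s+1$; this is Lemma~\ref{lem1} with $k=3$. Here is a concrete counterexample at threshold $2s+1$. For $s=1$, the triples $123,124,125,134,234,345$ form an intersecting family, yet $\{1,2\}$ and $\{3,4\}$ are disjoint pairs of codegree $3=2s+1$. You can repair this by raising the threshold to $3s+1$. But then light pairs may have codegree up to $3s$, which makes your light-pair problem worse.

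\textbf{The central step is not proved.} You concede that bounding the light pairs by $T(2s)^{p-1}$ is too lossy. Your replacement has two parts: the number of ``excess'' light pairs is $O(s(h_0-h))$, and $h$ near $h_0$ yields $s$ vertices of huge degree in $\mathcal G$. Both are only asserted to ``come from the shifting/structure of $\mathcal G$''. Erd\H{o}s--Gallai gives $h\le h_0$ and identifies the graphs with $h=h_0$. You invoke no stability version that would produce the $s$ high-degree vertices when $h$ is merely close to $h_0$ at $n=6s+3$. Even with one, the intermediate range of $h$ is left to the unproved trade-off, and so is your claim that equality forces $h=h_0$. Also, a light pair avoiding $x_1,\dots,x_s$ has codegree at most $s$ only once you know every triple meets $\{x_1,\dots,x_s\}$.

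\textbf{The missing idea.} The paper uses induction on $s$, simultaneously for all $p$, with a vertex-deletion dichotomy.

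If some vertex $i$ has $\nu(\mathcal H_{\bar i})=s-1$, then every pair avoiding $i$ has codegree at most $d_{\mathcal H_{\bar i}}(E)+1$. Expand $(d+1)^p$ and apply the induction hypothesis to every moment $co_j(\mathcal H_{\bar i})$, $1\le j\le p$, on $n-1\ge 6(s-1)+3$ vertices. This reproduces term by term the recurrence of Lemma~\ref{eq1}, so the light pairs are handled exactly, with no trade-off.

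If instead $\nu(\mathcal H_{\bar i})=s$ for every $i$, no vertex can have $3s+1$ neighbours in $\mathcal K_{3s+1}(\mathcal H)$. Indeed, an $s$-matching avoiding $i$ leaves some heavy partner $j$ outside it, and $\{i,j\}$ then extends to an $(s+1)$-matching. So the heavy graph has maximum degree at most $3s$ and matching number at most $s$, hence at most $s(3s+1)$ edges by Vizing.

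With so few heavy pairs, exactly the kind of crude convexity bound you rejected (Lemma~\ref{ineq2}) gives $co_p(\mathcal H)<\bigl(\binom n2-\binom{n-s}2\bigr)(n-2)^p$. This is below the target without even using its light-pair term $\binom{n-s}2 s^p$. The crude bound is only too lossy when there are $\Theta(sn)$ heavy pairs, and that situation is absorbed by the inductive case.
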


Wang and Peng \cite{W-P} determined the maximum number of copies of
$S_{k,2}^{k-1}$ under a matching-number constraint when
$n\geq 2sk^3$.  Since every $k$-uniform family with matching number at
most $s$ has $O_{k,s}(n^{k-1})$ edges, the identity
\[
co_2(\mathcal H)
=
2N(S_{k,2}^{k-1},\mathcal H)+k|\mathcal H|
\]
shows that their sunflower-counting result also determines the leading
term of the corresponding second codegree moment.  Our results extend this leading-order correspondence
to exact extremal theorems in two directions: Theorem~\ref{main1}
determines the maximum number of copies of $S_{k,l}^{k-1}$, together
with the unique extremal family, for every $l\geq2$, while
Theorem~\ref{main2} treats every positive integral codegree moment.
Moreover, when $k=3$, Theorem~\ref{th k=3} provides the explicit
linear threshold $n\geq6s+3$, uniformly for all $p$.
\section{Proofs of the main theorems}\label{section 2}
We first prove two stability statements.  They show that a family with nearly extremal sunflower count or codegree moment is covered by at most $s$ points.  This structural conclusion reduces the extremal problem to the canonical family $\mathcal H_{n,k,s}$.

\begin{thm}\label{sta main1}
    Let $k,s,l$ be positive integers with $s\geq1$ and $k,l\geq2$.  For every $\epsilon>0$ there exists $n_\epsilon$ such that, whenever $n\geq n_\epsilon$ and $\mathcal H\subseteq\binom{[n]}k$ satisfies $\nu(\mathcal H)\leq s$ and
    \[
    N(S_{k,l}^{k-1},\mathcal H)\geq
    \left(\frac{s-1}{(k-2)!l!}+\epsilon\right)n^{k+l-2},
    \]
    the family $\mathcal H$ is the union of at most $s$ trivial intersecting families.
\end{thm}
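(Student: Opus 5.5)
The plan is to pass from $\mathcal H$ to its high-codegree $(k-1)$-shadow. The sunflower count is governed by the $(k-1)$-sets of linear codegree, and these form a $(k-1)$-uniform family with matching number at most $s$. The hypothesis then forces this shadow to contain many sets through each of exactly $s$ vertices, and a greedy matching argument shows those $s$ vertices cover $\mathcal H$. Covering $\mathcal H$ by a set $T$ with $|T|\le s$ is exactly the statement that $\mathcal H$ is a union of at most $s$ trivial intersecting families (the stars at the points of $T$).

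\textbf{Step 1 (high-codegree shadow).} Let $\mathcal G=\{E\in\binom{[n]}{k-1}: d_{\mathcal H}(E)\ge k(s+1)\}$. We claim $\nu(\mathcal G)\le s$. Suppose $E_1,\dots,E_{s+1}\in\mathcal G$ are pairwise disjoint. Extend them one at a time: extend $E_i$ to an edge $E_i\cup\{v_i\}$ of $\mathcal H$ with $v_i$ outside $\bigcup_j E_j\cup\{v_1,\dots,v_{i-1}\}$. This set has fewer than $k(s+1)$ elements, and $E_i$ has at least $k(s+1)$ extensions, so $v_i$ exists. The result is a matching of size $s+1$ in $\mathcal H$, a contradiction.

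For sets outside $\mathcal G$ we have $\binom{d(E)}{l}\le\binom{k(s+1)}{l}$, so they contribute $O(n^{k-1})=o(n^{k+l-2})$ since $l\ge2$. Each $E\in\mathcal G$ contributes at most $\binom{n}{l}\le n^l/l!$. The hypothesis therefore gives, for large $n$,
\[
|\mathcal G|\ge \Bigl(\tfrac{s-1}{(k-2)!}+\tfrac{\epsilon l!}{2}\Bigr)n^{k-2}\ge (s-1+\epsilon')\binom{n-1}{k-2}
\]
for some $\epsilon'=\epsilon'(\epsilon,k,l)>0$.

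\textbf{Step 2 (structure of $\mathcal G$).} Fix a small $\delta>0$ and call a vertex $x$ \emph{heavy} if $d_{\mathcal G}(x)\ge\delta n^{k-2}$. Let $T$ be the set of heavy vertices.

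\emph{At most $s$ heavy vertices.} Suppose there were $s+1$ heavy vertices $x_1,\dots,x_{s+1}$. Choose sets $E_{x_i}\in\mathcal G$ greedily, pairwise disjoint. At each stage only $O(n^{k-3})$ of the $\ge\delta n^{k-2}$ sets through $x_i$ meet the $O(1)$ vertices already used, so a choice always exists. This gives a matching of size $s+1$ in $\mathcal G$, contradicting Step 1. Hence $|T|\le s$.

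\emph{Few sets avoid $T$.} Let $\mathcal G'$ be the sets of $\mathcal G$ missing $T$. Then $\nu(\mathcal G')\le s$, so the vertex set of a maximum matching, of size at most $(k-1)s$, covers $\mathcal G'$. Each of these vertices is light, so $|\mathcal G'|\le (k-1)s\delta n^{k-2}$.

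\emph{Exactly $s$ heavy vertices.} Therefore $|\mathcal G|\le |T|\binom{n-1}{k-2}+(k-1)s\delta n^{k-2}$. Choosing $\delta$ small relative to $\epsilon'$ and comparing with Step 1 forces $|T|\ge s$, hence $|T|=s$.

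This direct degree argument avoids invoking Theorem~\ref{F-K}, which would require $k-1\ge3$. It also covers $k=2$ (vertices of degree at least $2s+2$) and $k=3$ (graphs) uniformly.

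\textbf{Step 3 ($T$ covers $\mathcal H$).} Suppose some $F\in\mathcal H$ has $F\cap T=\emptyset$. For each $x\in T$ in turn, pick $E_x\in\mathcal G$ with $x\in E_x$, disjoint from $F$ and from all previously used vertices. This is possible because $x$ is heavy while only $O(n^{k-3})$ sets through $x$ meet the $O(ks)$ used vertices. Then extend each $E_x$ to an edge of $\mathcal H$ avoiding all used vertices, exactly as in Step 1. Together with $F$ this gives $s+1$ pairwise disjoint edges, contradicting $\nu(\mathcal H)\le s$. Thus every edge of $\mathcal H$ meets $T$, and $\mathcal H$ is the union of the $|T|\le s$ trivial intersecting families $\{F\in\mathcal H: x\in F\}$, $x\in T$.

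\textbf{Main obstacle.} The delicate point is the constant bookkeeping in Steps 1--2: the loss from light vertices, $(k-1)s\delta n^{k-2}$, must stay below the $\epsilon'$-surplus, while the count of sets through a heavy vertex must beat the $O(n^{k-3})$ obstructions. Both hold once $\delta$ is fixed in terms of $\epsilon,k,s,l$ and $n_\epsilon$ is then taken large.
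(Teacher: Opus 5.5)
Your proof is correct, but it takes a different route from the paper.

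The paper never handles the sunflower count directly. It uses $d^l\ge l!\binom dl$ to pass to $co_l$ and Theorem~\ref{sta main2}, obtains a large shadow $\mathcal K_{sk+1}(\mathcal H)$ via Lemma~\ref{lem2}, and then in Lemma~\ref{lem3} applies the stability Lemma~\ref{lemma sta} to get $\tau(\mathcal K_{sk+1}(\mathcal H))\le s$. That lemma rests on Frankl's bound (Theorem~\ref{result for EMC}, used to force $\nu=s$) and on the Frankl--Kupavskii theorem (Theorem~\ref{F-K}), with a separate graph induction for $2$-uniform shadows. The cover is then lifted to $\mathcal H$ by applying Theorem~\ref{result for EMC} once more, to $\mathcal K_{(s-1)k+1}(\mathcal H')$, where $\mathcal H'$ consists of the edges disjoint from an uncovered edge.

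You replace all of this with an elementary heavy-vertex count. You never prove that the shadow is covered by $s$ points; you only show it has exactly $s$ vertices of degree $\Omega(n^{k-2})$, which is precisely what the greedy lift in your Step~3 needs. What your route buys: it is self-contained, treats all $k\ge2$ uniformly, and works verbatim for $co_p$. Applied to $\mathcal H$ itself instead of its shadow, it would even reprove Lemma~\ref{lemma sta} without Theorem~\ref{F-K}. What the paper's route buys: the stronger intermediate statement $\tau(\mathcal K_{sk+1}(\mathcal H))\le s$, at the cost of relying on the deeper known theorems above.

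One detail to tighten: in the greedy choices of Steps~2 and~3, each $E_{x_i}$ must also avoid the other heavy vertices $x_j$ ($j\ne i$), not only vertices already used. Otherwise a later $E_{x_j}$ cannot be disjoint from it. This adds only $O(s)$ forbidden vertices, hence only $O(n^{k-3})$ excluded sets, so your counts are unaffected.
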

\begin{thm}\label{sta main2}
    Let $k,s,p$ be positive integers with $s\geq1$ and $k,p\geq2$.  For every $\epsilon>0$ there exists $n_\epsilon$ such that, whenever $n\geq n_\epsilon$ and $\mathcal H\subseteq\binom{[n]}k$ satisfies $\nu(\mathcal H)\leq s$ and
    \[
    co_p(\mathcal H)\geq
    \left(\frac{s-1}{(k-2)!}+\epsilon\right)n^{k+p-2},
    \]
    the family $\mathcal H$ is the union of at most $s$ trivial intersecting families.
\end{thm}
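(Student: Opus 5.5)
We prove the contrapositive: if $\nu(\mathcal H)\le s$ and $\mathcal H$ is not a union of at most $s$ trivial intersecting families, then $co_p(\mathcal H)\le\bigl(\frac{s-1}{(k-2)!}+o(1)\bigr)n^{k+p-2}$. Being a union of at most $s$ trivial intersecting families is the same as $\tau(\mathcal H)\le s$, so we assume $\tau(\mathcal H)>s$.

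\textbf{Step 1 (reduce to matching number exactly $s$).} If $\nu(\mathcal H)=t<s$, then $\mathcal H$ need not satisfy $\tau(\mathcal H)>t$, so we argue directly by counting. The codegree of a $(k-1)$-set is at most $n$. Let $V$ be the union of a maximum matching, so $|V|\le tk$. Every edge meets $V$.

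\begin{itemize}
\item A $(k-1)$-set $E$ disjoint from $V$ has $d_{\mathcal H}(E)\le tk$. There are at most $\binom{n}{k-1}$ such sets, so they contribute $O(n^{k-1})$ in total.
\item A $(k-1)$-set $E$ meeting $V$ has codegree at most $n$. Such sets are $O(n^{k-2})$ in number, so they contribute $O(n^{k-2}\cdot n^p)=O(n^{k+p-2})$. This bound is too crude, so we refine it below.
\end{itemize}

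\textbf{Step 2 (high-codegree shadow).} Call $E$ \emph{heavy} if $d_{\mathcal H}(E)\ge \delta n$, for a small $\delta>0$ to be chosen. The key claim is that a heavy $E$ must meet the set $X$ of vertices of high degree. Suppose $E$ misses some matching structure; then the $\ge \delta n$ edges $E\cup\{x\}$ have distinct $x$, and these can be used greedily to extend matchings. A standard argument then shows there is a set $X$ with $|X|\le\nu(\mathcal H)$ such that every heavy $E$ meets $X$. Concretely, take $X$ to be the vertices lying in $\Omega(n^{k-1})$ edges. If some heavy $E$ were disjoint from a suitable cover of size $s$, we could build $s+1$ disjoint edges, using $E$ together with the fact that $E$ has many distinct extension vertices.

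Non-heavy sets contribute at most
\[
\binom{n}{k-1}(\delta n)^{p-1}\max d \;\lesssim\; \sum_E d_E\,(\delta n)^{p-1}=k|\mathcal H|(\delta n)^{p-1}=O(\delta^{p-1}n^{k+p-2}).
\]
For heavy sets meeting $X$, the sets meeting $X$ in $\ge2$ points number $O(n^{k-3})$ and contribute $O(n^{k+p-3})$. The sets meeting $X$ in exactly one point $x$ number at most $\binom{n}{k-2}$ per $x$. Hence the main term is at most $\sum_{x\in X}\binom{n}{k-2}n^p\approx |X|\,n^{k+p-2}/(k-2)!$.

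\textbf{Step 3 (use $\tau>s$ to get $|X|\le s-1$ effectively).} This is the main obstacle. When $\nu=s$ and $\tau>s$, we apply Theorem~\ref{F-K} in its structural form: the family cannot contain $s$ full stars. More precisely, we aim to show that at most $s-1$ vertices $x$ have $\Omega(n^{k-2})$ heavy sets $E\ni x$. If $s$ such vertices $x_1,\dots,x_s$ existed, then for large $n$, any edge $F$ avoiding $\{x_1,\dots,x_s\}$ (which exists since $\tau>s$) could be combined with $s$ disjoint edges through the $x_i$. These edges are chosen greedily, avoiding $F$ and each other, which is possible because each $x_i$ has $\Omega(n^{k-2})$ heavy $(k-1)$-sets with $\Omega(n)$ extensions. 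This yields a matching of size $s+1$, a contradiction.

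The remaining vertices' heavy sets total $o(n^{k-2})$ and contribute $o(n^{k+p-2})$. Choosing $\delta$ small relative to $\epsilon$, the total is at most $\bigl(\frac{s-1}{(k-2)!}+\epsilon\bigr)n^{k+p-2}$, and the inequality is strict for large $n$. This proves the contrapositive. The same greedy argument also handles Step 1, since fewer than $s$ vertices are then available.
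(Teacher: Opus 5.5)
Your argument is correct in substance once Step~3 together with the truncation bound is taken as the actual proof, but two points need repair.

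\textbf{First repair.} The claim in Step~2 that some $X$ with $|X|\le\nu(\mathcal H)$ meets every heavy set is false. Take $k\ge3$ and $\mathcal H=\{F\in\binom{[n]}k:|F\cap[3]|\ge2\}$. Then $\nu(\mathcal H)=1$, and every $(k-1)$-set containing two points of $[3]$ has codegree $n-k+1$. No single vertex meets all of these sets. Your alternative definition of $X$, via vertices in $\Omega(n^{k-1})$ edges, gives $X=\emptyset$ on this example. Delete the claim; Step~3 does not need it.

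\textbf{Second repair.} The sentence ``the remaining vertices' heavy sets total $o(n^{k-2})$'' carries the whole count and is not justified as written: a priori, many vertices could each lie in a few heavy sets. It follows from your own Step~1 observation. A $(k-1)$-set disjoint from the vertex set $V$ of a maximum matching has codegree at most $sk<\delta n$, so every heavy set meets $V$, and $|V|\le sk$.

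Now fix $\eta>0$ depending on $\epsilon,k,s$, before $n$ grows. Let $Y$ be the set of vertices lying in at least $\eta n^{k-2}$ heavy sets. Your greedy argument gives $|Y|\le s-1$ for large $n$; it needs $(s+1)k\binom{n}{k-3}<\eta n^{k-2}$ and $\delta n>(s+1)k$. The heavy sets missing $Y$ all meet $V\setminus Y$, so they number at most $sk\,\eta n^{k-2}$. This is $O(\eta n^{k-2})$, not $o(n^{k-2})$, but it suffices. Combine this with $d^p\le(\delta n)^{p-1}d$ on light sets and $|\mathcal H|\le sk\binom{n-1}{k-1}$ to get
\[
co_p(\mathcal H)\le\Bigl(\frac{s-1}{(k-2)!}+sk\eta+\frac{sk^2}{(k-1)!}\delta^{p-1}\Bigr)n^{k+p-2},
\]
which is what you need. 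The greedy step uses only $\nu(\mathcal H)\le s$ and $\tau(\mathcal H)>s$. So the Step~1 reduction to $\nu(\mathcal H)=s$ is unnecessary, and so is the appeal to Theorem~\ref{F-K}, which in any case does not say the family ``cannot contain $s$ full stars''.

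\textbf{Comparison with the paper.} The paper takes a different route. It uses the constant threshold $sk+1$ (Lemma~\ref{lem2}); light sets then contribute only $O(n^{k-1})$. In Lemma~\ref{lem3} it deduces $\tau(\mathcal H)\le s$ from a large $\mathcal K_{sk+1}(\mathcal H)$ by applying the Erd\H{o}s matching stability Lemma~\ref{lemma sta} to the $(k-1)$-uniform shadow. That lemma is built on Theorems~\ref{result for EMC} and~\ref{F-K}. The paper then lifts the resulting cover back to $\mathcal H$ via Theorem~\ref{result for EMC} and Lemma~\ref{lem1}.

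Your route replaces these deep inputs with a direct greedy count of vertices that are rich in the heavy shadow, and it is self-contained. The same greedy even works at the paper's threshold $sk+1$: when extending at $x_i$, the forbidden set has at most $ik+s-i\le sk$ vertices. So it also gives an elementary proof of Lemma~\ref{lem3}. What the paper's route buys is Lemma~\ref{lemma sta} itself, which it reuses for the equality case $p=1$ of Theorem~\ref{main2}. There your truncation gains nothing, because light sets contribute at the leading order.
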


For $\mathcal H\subseteq\binom{[n]}k$ and $d\geq1$, write
\[
\mathcal K_d(\mathcal H):=\left\{K\in\binom{[n]}{k-1}:d_{\mathcal H}(K)\geq d\right\}.
\]
The proof first obtains a covering stability statement for large families.  A large codegree moment then forces $\mathcal K_{sk+1}(\mathcal H)$ to be large, so the stability statement controls this high-codegree shadow.  Finally, the cover of the shadow is lifted to a cover of $\mathcal H$.

\begin{lemma}[Stability of Erd\H{o}s matching conjecture]\label{lemma sta}
    Let $k,s$ be positive integers.  For every $\epsilon>0$ there exists $n'_\epsilon$ such that, whenever $n\geq n'_\epsilon$ and $\mathcal H\subseteq\binom{[n]}k$ satisfies $\nu(\mathcal H)\leq s$ and
    \[
    |\mathcal H|\geq\left(\frac{s-1}{(k-1)!}+\epsilon\right)n^{k-1},
    \]
    the family $\mathcal H$ is the union of at most $s$ trivial intersecting families.
\end{lemma}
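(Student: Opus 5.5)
We argue by induction on $s$, using Theorem~\ref{F-K} for the step that produces a cover. "Union of at most $s$ trivial intersecting families" is equivalent to $\tau(\mathcal H)\le s$: given a cover $T$ with $|T|\le s$, the families $\{F\in\mathcal H:x\in F\}$ for $x\in T$ are trivial intersecting and their union is $\mathcal H$. So it suffices to show $\tau(\mathcal H)\le s$.

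\textbf{Reduction to $\nu(\mathcal H)=s$.} Suppose $\nu(\mathcal H)=s'<s$. Then Theorem~\ref{result for EMC} (for $n$ large) gives
\[
|\mathcal H|\le\binom nk-\binom{n-s'}k .
\]
We would like to conclude directly that $\tau(\mathcal H)\le s'\le s$. For $s'\ge1$ with $n$ large, we apply Theorem~\ref{F-K} with $s'$ in place of $s$. It says that if $\tau(\mathcal H)>s'$, then
\[
|\mathcal H|\le\binom nk-\binom{n-s'}k+1-\binom{n-s'-k}{k-1}=\frac{s'-1}{(k-1)!}n^{k-1}+O(n^{k-2}).
\]
The key asymptotic is
\[
\binom nk-\binom{n-s}k+1-\binom{n-s-k}{k-1}=\frac{s}{(k-1)!}n^{k-1}-\frac{1}{(k-1)!}n^{k-1}+O(n^{k-2}),
\]
whose leading coefficient is $(s-1)/(k-1)!$. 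Since $s'-1<s-1$, this bound falls below the hypothesis $|\mathcal H|\ge\bigl(\tfrac{s-1}{(k-1)!}+\epsilon\bigr)n^{k-1}$ once $n$ is large, so $\tau(\mathcal H)\le s'$. The case $s'=0$ is trivial: then $\mathcal H=\emptyset$, which contradicts the size hypothesis, or is vacuously covered.

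\textbf{The main case $\nu(\mathcal H)=s$.} Here Theorem~\ref{F-K} applies with $s$ itself, since $k\ge3$ and $n\ge(s+\max\{25,2s+2\})k$. If $\tau(\mathcal H)>s$, then the asymptotic above gives $|\mathcal H|\le\frac{s-1}{(k-1)!}n^{k-1}+O(n^{k-2})$. This is smaller than $\bigl(\tfrac{s-1}{(k-1)!}+\epsilon\bigr)n^{k-1}$ for $n\ge n'_\epsilon$, a contradiction. Hence $\tau(\mathcal H)\le s$. We choose $n'_\epsilon$ large enough to absorb the $O(n^{k-2})$ error terms and to satisfy the thresholds of Theorems~\ref{F-K} and~\ref{result for EMC}.

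\textbf{The case $k\le2$.} Theorem~\ref{F-K} requires $k\ge3$, so these cases are handled separately, and this is the only real obstacle I anticipate.
\begin{itemize}
\item For $k=1$, $\nu(\mathcal H)\le s$ means $|\mathcal H|\le s$. Each singleton is a trivial family, so there are at most $s$ pieces.
\item For $k=2$, the hypothesis gives $|\mathcal H|\ge(s-1+\epsilon)n$. We use the structure of graphs with matching number $s$ (Gallai–Edmonds / Erdős–Gallai type). Remove a maximal set $T$ of vertices of degree greater than $2s$; such high-degree vertices can be greedily matched, so $|T|\le s$. The remaining graph has maximum degree at most $2s$ and matching number at most $s-|T|$. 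By Vizing, its number of edges is at most $(2s+1)(s-|T|)=O(1)$. Therefore $|T|(n-1)+O(1)\ge(s-1+\epsilon)n$, which forces $|T|=s$. Then the remaining graph has matching number $0$, so it is empty, and the $s$ vertices of $T$ cover all edges.
\end{itemize}
Alternatively, the same high-degree-vertex argument works for every $k$: vertices of degree $\Omega(n^{k-2})$ can be greedily extended to disjoint edges. This avoids the $k\ge3$ restriction, and I would fall back on it if needed.
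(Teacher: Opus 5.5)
Your proof is correct. For $k\ge3$ it follows the paper: both rule out $\tau(\mathcal H)>s$ with Theorem~\ref{F-K}, whose bound has leading coefficient $(s-1)/(k-1)!$. When $\nu(\mathcal H)=s'<s$ you apply Theorem~\ref{F-K} at level $s'$. The paper instead notes that Theorem~\ref{result for EMC} already gives $|\mathcal H|\le\binom nk-\binom{n-s+1}k=\bigl(\tfrac{s-1}{(k-1)!}+O(n^{-1})\bigr)n^{k-1}$, so this case cannot occur at all. That is exactly what the EMC bound you quote would give you directly; as written, you state it and then never use it, and the announced induction on $s$ is also never used.

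The genuine difference is at $k=2$. The paper proves by induction on $s$ that $\nu(G)\le s$ and $\tau(G)>s$ imply $e(G)\le(s-1)|V(G)|+\binom{4s^2+2s}2$. Its key step is that if $\nu(G-v)=s$ for every $v$, then all edges lie inside a set of at most $4s^2+2s$ vertices. You instead take the set $T$ of vertices of degree $>2s$ and get $|T|\le s$ and $\nu(G-T)\le s-|T|$ by greedy extension. For the second claim, write out the count: the forbidden vertices number at most $2(s-|T|+1)+2(|T|-1)=2s$. You then bound $e(G-T)$ by Vizing. This is shorter and produces the cover directly.

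Your closing remark is also correct and deserves to be the main proof. Take the threshold $D=s(k+1)\binom{n-2}{k-2}$. The same greedy steps give $|T|\le s$ and $\nu(\{F\in\mathcal H:F\cap T=\emptyset\})\le s-|T|$. Every such $F$ meets the at most $k(s-|T|)$ vertices of a maximum matching of that subfamily. Those vertices lie outside $T$, so each has degree at most $D$, and there are at most $k(s-|T|)D=O(n^{k-2})$ such edges. Hence $|\mathcal H|\le|T|\binom{n-1}{k-1}+O(n^{k-2})$, which forces $|T|=s$, and then $T$ covers $\mathcal H$.

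This proves the lemma for every $k$ without Theorems~\ref{result for EMC} or~\ref{F-K}. The deep Frankl--Kupavskii input buys a sharp Hilton--Milner-type bound with an explicit threshold. This lemma does not need that, since its threshold $n'_\epsilon$ already depends on $\epsilon$.
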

\begin{proof}
	It suffices to prove that $\tau(\mathcal H)\leq s$.  When $k=1$, this
	is immediate, since $\mathcal H$ consists of at most $s$ singletons.
	
	We first establish the following estimate for graphs.  If a graph $G$
	satisfies $\nu(G)\leq s$ and $\tau(G)>s$, then
	\begin{equation}\label{graph stability estimate}
		e(G)\leq
		(s-1)|V(G)|+\binom{4s^2+2s}{2}.
	\end{equation}
	We prove this by induction on $s$.  If $s=1$, then $G$ is an
	intersecting graph with no one-vertex cover.  Such a graph is contained
	in a triangle, and hence
	$
	e(G)\leq3\leq\binom{6}{2}.
	$
	
	Now let $s\geq2$ and suppose that the assertion holds with $s-1$ in
	place of $s$.  If there is a vertex $v$ such that
	$\nu(G-v)\leq s-1$, then $\tau(G-v)>s-1$, since otherwise a cover of
	$G-v$ of size at most $s-1$, together with $v$, would cover $G$.
	Therefore, by induction,
	\[
	\begin{aligned}
		e(G)
		&=e(G-v)+d_G(v)\\
		&\leq
		(s-2)(|V(G)|-1)
		+\binom{4(s-1)^2+2(s-1)}{2}
		+|V(G)|-1\\
		&=
		(s-1)(|V(G)|-1)
		+\binom{4(s-1)^2+2(s-1)}{2}\\
		&\leq
		(s-1)|V(G)|+\binom{4s^2+2s}{2}.
	\end{aligned}
	\]
	
	It remains to consider the case in which $\nu(G-v)=s$ for every
	$v\in V(G)$.  Fix a matching $M$ of size $s$ and put $U=V(M)$.  For
	each $u\in U$, choose a matching $M_u$ of size $s$ in $G-u$, and set
	\[
	W:=U\cup\bigcup_{u\in U}V(M_u).
	\]
	Since $|U|=2s$, we have
	$
	|W|\leq2s+2s\cdot2s=4s^2+2s.
	$
	We claim that every vertex outside $W$ is isolated.  Indeed, suppose
	that $xy\in E(G)$ with $x\notin W$.  If $y\notin U$, then
	$M\cup\{xy\}$ is a matching of size $s+1$, contradicting
	$\nu(G)\leq s$.  Hence $y\in U$.  Since $M_y$ is a matching in $G-y$
	and $x\notin V(M_y)$, the set $M_y\cup\{xy\}$ is again a matching of
	size $s+1$, a contradiction.  Thus every edge of $G$ is contained in
	$W$, and consequently
	\[
	e(G)\leq\binom{|W|}{2}
	\leq\binom{4s^2+2s}{2}.
	\]
	This proves \eqref{graph stability estimate}.
	
	When $k=2$, regard $\mathcal H$ as a graph on $[n]$.  If
	$\tau(\mathcal H)>s$, then \eqref{graph stability estimate} gives
	\[
	|\mathcal H|
	\leq
	(s-1)n+\binom{4s^2+2s}{2},
	\]
	which contradicts
	$
	|\mathcal H|\geq(s-1+\epsilon)n
	$
	for all sufficiently large $n$.  Hence
	$\tau(\mathcal H)\leq s$ when $k=2$.
	
	We now assume that $k\geq3$.  We first show that
	$\nu(\mathcal H)=s$.  If $s=1$, then the assumed lower bound implies
	that $\mathcal H$ is nonempty, and hence $\nu(\mathcal H)=1$.
	Suppose that $s\geq2$.  If $\nu(\mathcal H)\leq s-1$, then
	Theorem~\ref{result for EMC}, applied with matching bound $s-1$, gives
	\[
	\begin{aligned}
		|\mathcal H|
		\leq|\mathcal H_{n,k,s-1}|
		=\binom{n}{k}-\binom{n-s+1}{k}
		=
		\left(
		\frac{s-1}{(k-1)!}+O_{k,s}(n^{-1})
		\right)n^{k-1}
	\end{aligned}
	\]
	for all sufficiently large $n$.  This contradicts the assumed lower
	bound.  Therefore $\nu(\mathcal H)=s$.
	
	Suppose, for a contradiction, that $\tau(\mathcal H)>s$.  For all
	sufficiently large $n$, Theorem~\ref{F-K} gives
	\[
	\begin{aligned}
		|\mathcal H|
		\leq
		\binom{n}{k}-\binom{n-s}{k}
		+1-\binom{n-s-k}{k-1}
		=
		\left(
		\frac{s-1}{(k-1)!}+O_{k,s}(n^{-1})
		\right)n^{k-1},
	\end{aligned}
	\]
	again contradicting the assumed lower bound.  Hence
	$\tau(\mathcal H)\leq s$.
	
	Finally, let $X$ be a cover of $\mathcal H$ with $|X|\leq s$.  Then
	$
	\mathcal H
	=
	\bigcup_{x\in X}
	\{F\in\mathcal H:x\in F\},
	$
	and each family in this union is trivially intersecting.  Therefore
	$\mathcal H$ is the union of at most $s$ trivial intersecting
	families.
\end{proof}

The next lifting lemma relates the matching number of a high-codegree shadow to that of the original family.

\begin{lemma} \label{lem1}
    Let $k\geq2$, $s\geq1$, and $\mathcal H\subseteq\binom{[n]}k$.  If $\nu(\mathcal H)\leq s$, then $\nu(\mathcal K_{sk+1}(\mathcal H))\leq s$.
\end{lemma}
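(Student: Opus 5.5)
We argue by contraposition. Suppose $K_1,\dots,K_{s+1}$ are pairwise disjoint members of $\mathcal K_{sk+1}(\mathcal H)$, so each $K_i$ is a $(k-1)$-set with $d_{\mathcal H}(K_i)\geq sk+1$. We will greedily extend each $K_i$ by a single vertex $x_i\notin K_i$ so that $K_i\cup\{x_i\}\in\mathcal H$ and the resulting $k$-sets $F_i=K_i\cup\{x_i\}$ are pairwise disjoint. This produces a matching of size $s+1$ in $\mathcal H$ and contradicts $\nu(\mathcal H)\leq s$.

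Each $K_i$ has at least $sk+1$ distinct extension vertices $x$ with $K_i\cup\{x\}\in\mathcal H$, because distinct edges containing $K_i$ differ exactly in the added vertex. We choose $x_1,\dots,x_{s+1}$ in order. When choosing $x_i$, we must avoid two kinds of vertices. The first kind is the vertices of the other cores $K_j$ with $j\neq i$; there are $s(k-1)$ of these. The second kind is the previously chosen $x_j$ with $j<i$; there are at most $s$ of these. Vertices of $K_i$ itself are never extension vertices. So at most $s(k-1)+s=sk$ vertices are forbidden, and since $K_i$ has at least $sk+1$ extension vertices, a valid choice of $x_i$ always exists.

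With these choices the sets $F_i$ are pairwise disjoint. For $i\neq j$ we have $K_i\cap K_j=\emptyset$ by assumption. Also $x_i\notin K_j$ and $x_j\notin K_i$, since each $x$ was chosen to avoid all other cores. Finally $x_i\neq x_j$, since the later of the two was chosen to avoid the earlier one. Thus $\{F_1,\dots,F_{s+1}\}$ is a matching in $\mathcal H$ of size $s+1$, giving the contradiction.

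The only point needing care is the count of forbidden vertices, and the threshold $sk+1$ is exactly what makes it go through. There is no real obstacle beyond this bookkeeping.
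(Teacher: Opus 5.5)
Your proposal is correct and is essentially the paper's proof: both greedily extend the disjoint $(k-1)$-sets one vertex at a time and obtain an $(s+1)$-matching in $\mathcal H$. Your forbidden-vertex count at the $(m+1)$-st step, $s(k-1)+m\le sk$, is the same as the paper's $mk+(s-m)(k-1)$; you bound it by $sk$ uniformly rather than computing the slack $s-m+1$ exactly.
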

\begin{proof}
Suppose that $F_1,\ldots,F_{s+1}$ are pairwise disjoint members of $\mathcal K_{sk+1}(\mathcal H)$.  We greedily choose pairwise disjoint $G_1,\ldots,G_{s+1}\in\mathcal H$ with $F_i\subset G_i$.  Assume that $G_1,\ldots,G_m$ have been chosen and are disjoint from $F_{m+1},\ldots,F_{s+1}$.  An extension of $F_{m+1}$ is forbidden only when its additional vertex lies in
\[
\bigcup_{i=1}^mG_i\ \cup\!\bigcup_{i=m+2}^{s+1}F_i,
\]
which contains at most $mk+(s-m)(k-1)$ vertices.  Consequently, the number of admissible extensions is at least
\[
sk+1-mk-(s-m)(k-1)=s-m+1>0.
\]
This completes the greedy construction and produces an $(s+1)$-matching in $\mathcal H$, a contradiction.
\end{proof}

A large codegree moment forces the high-codegree shadow to be large.

\begin{lemma}\label{lem2}
     Let $k,p\geq2$, $s\geq1$, and $\epsilon>0$.  For all sufficiently large $n$, if $\mathcal H\subseteq\binom{[n]}k$ satisfies
     \[
     co_p(\mathcal H)\geq
     \left(\frac{s-1}{(k-2)!}+\epsilon\right)n^{k+p-2},
     \]
     then
     \[
     |\mathcal K_{sk+1}(\mathcal H)|\geq
     \left(\frac{s-1}{(k-2)!}+\frac{\epsilon}{2}\right)n^{k-2}.
     \]
\end{lemma}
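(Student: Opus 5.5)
Split the sum defining $co_p(\mathcal H)$ according to whether the $(k-1)$-set lies in $\mathcal K_{sk+1}(\mathcal H)$. Every codegree satisfies $d_{\mathcal H}(E)\le n-k+1\le n$, and sets outside $\mathcal K_{sk+1}(\mathcal H)$ have codegree at most $sk$. Hence
\[
co_p(\mathcal H)\le |\mathcal K_{sk+1}(\mathcal H)|\,n^{p}+\binom{n}{k-1}(sk)^p .
\]

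The second term is $O_{k,s,p}(n^{k-1})$. Since $p\ge2$, we have $k+p-2\ge k$, so this term is $o(n^{k+p-2})$. Choose $n$ large enough that
\[
\binom{n}{k-1}(sk)^p\le \frac{\epsilon}{2}\,n^{k+p-2}.
\]
Then
\[
|\mathcal K_{sk+1}(\mathcal H)|\,n^p\ge\left(\frac{s-1}{(k-2)!}+\frac{\epsilon}{2}\right)n^{k+p-2}.
\]
Dividing by $n^p$ gives the claimed bound $|\mathcal K_{sk+1}(\mathcal H)|\ge\bigl(\frac{s-1}{(k-2)!}+\frac{\epsilon}{2}\bigr)n^{k-2}$.

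No step is really an obstacle. The only point to check is that the low-codegree contribution has strictly smaller order, and this is exactly where $p\ge2$ is used: for $p=1$ the error $O(n^{k-1})$ would be of the same order as the main term $n^{k+p-2}=n^{k-1}$. The threshold on $n$ depends only on $k,s,p,\epsilon$.
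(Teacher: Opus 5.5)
Your proposal is correct and is essentially the paper's own argument: you split $co_p(\mathcal H)$ over $\mathcal K_{sk+1}(\mathcal H)$ and its complement, bound the two parts by $|\mathcal K_{sk+1}(\mathcal H)|\,n^p$ and $(sk)^p\binom{n}{k-1}=O(n^{k-1})$, and then use $p\geq2$ to absorb the error into $\frac{\epsilon}{2}n^{k+p-2}$. Your write-up is, if anything, slightly more explicit than the paper's about choosing $n$ so that the error is at most $\frac{\epsilon}{2}n^{k+p-2}$.
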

\begin{proof}
Put $\mathcal K=\mathcal K_{sk+1}(\mathcal H)$.  Since every $(k-1)$-set outside $\mathcal K$ has codegree at most $sk$,
\begin{align*}
co_p(\mathcal H)
&\leq |\mathcal K|(n-k+1)^p+(sk)^p\binom n{k-1}\\
&\leq |\mathcal K|n^p+O(n^{k-1}).
\end{align*}
Because $p\geq2$, the error term is $o(n^{k+p-2})$.  The asserted lower bound for $|\mathcal K|$ follows for sufficiently large $n$.
\end{proof}

\begin{lemma}\label{lem3}
     Let $k\geq2$, $s\geq1$, and $\epsilon>0$.  For all sufficiently large $n$, suppose that $\mathcal H\subseteq\binom{[n]}k$ satisfies $\nu(\mathcal H)\leq s$ and
     \[
     |\mathcal K_{sk+1}(\mathcal H)|\geq
     \left(\frac{s-1}{(k-2)!}+\epsilon\right)n^{k-2}.
     \]
     Then $\mathcal H$ is the union of at most $s$ trivial intersecting families.
\end{lemma}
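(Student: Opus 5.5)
Write $\mathcal K=\mathcal K_{sk+1}(\mathcal H)\subseteq\binom{[n]}{k-1}$. The plan is to apply Lemma~\ref{lemma sta} to $\mathcal K$ with uniformity $k-1$ in place of $k$, and then lift the resulting cover of $\mathcal K$ to a cover of $\mathcal H$.

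First, Lemma~\ref{lem1} gives $\nu(\mathcal K)\leq s$. The hypothesis reads $|\mathcal K|\geq\bigl(\frac{s-1}{((k-1)-1)!}+\epsilon\bigr)n^{(k-1)-1}$, which is exactly the hypothesis of Lemma~\ref{lemma sta} for $(k-1)$-uniform families. That lemma covers all $k-1\geq1$; the case $k-1=1$ is handled there directly. Hence, for $n\geq n'_\epsilon(k-1,s)$, we get $\tau(\mathcal K)\leq s$. Fix a cover $X$ of $\mathcal K$ with $|X|\leq s$.

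Next we lift: we claim $X$ covers $\mathcal H$. Suppose $F\in\mathcal H$ with $F\cap X=\emptyset$. We would like to combine $F$ with $s$ pairwise disjoint members of $\mathcal H$ avoiding $F$, obtaining an $(s+1)$-matching. To find these, we use the size of $\mathcal K$.

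\begin{itemize}
\item If $s=1$, then $\mathcal K$ is nonempty (since $|\mathcal K|\geq\epsilon n^{k-2}>0$) and intersecting. Take $K\in\mathcal K$; it contains some $x\in X$, so $K\not\subseteq F$.
\item For general $s$, note that $\mathcal K$ has $\Omega(n^{k-2})$ members while any fixed bounded set of vertices meets only $O(n^{k-3})$ of those avoiding a given point. More usefully, since $|\mathcal K|>\frac{s-1}{(k-2)!}n^{k-2}$, every point of $X$ must have degree $\Omega(n^{k-2})$ in $\mathcal K$. Otherwise, removing a low-degree point $x$ leaves at most $s-1$ points covering all but $o(n^{k-2})$ members, and then $|\mathcal K|\leq (s-1)\binom{n}{k-2}+o(n^{k-2})$, a contradiction for large $n$.
\end{itemize}

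So for each $x\in X$ the link $\{K\in\mathcal K:x\in K\}$ has $\Omega(n^{k-2})$ members. We greedily choose $K_1,\dots,K_{|X|}\in\mathcal K$ with $x_i\in K_i$, pairwise disjoint and disjoint from $F$. The sets meeting a fixed set of $O(sk)$ forbidden vertices (besides $x_i$) number only $O(n^{k-3})$, so this choice is possible.

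Then, exactly as in Lemma~\ref{lem1}, we extend each $K_i$ greedily to $G_i\in\mathcal H$ avoiding $F$ and the other chosen sets. At most $sk$ vertices are forbidden at each step, while $d_{\mathcal H}(K_i)\geq sk+1$, so this works. We now have $|X|$ disjoint edges $G_i$ plus $F$. To reach $s+1$ disjoint edges we need $|X|=s$, which holds: if $|X|<s$ then $|\mathcal K|\leq|X|\binom{n}{k-2}$, which contradicts the bound when $|X|\leq s-1$ for large $n$. Thus $\{F,G_1,\dots,G_s\}$ is an $(s+1)$-matching in $\mathcal H$, a contradiction. Hence $\tau(\mathcal H)\leq s$, and $\mathcal H=\bigcup_{x\in X}\{F\in\mathcal H:x\in F\}$ is a union of at most $s$ trivial intersecting families.

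The main obstacle is the degree claim: each cover point must carry $\Omega(n^{k-2})$ sets of $\mathcal K$. The counting above handles it because $\binom{n}{k-2}\approx n^{k-2}/(k-2)!$ and the hypothesis carries the extra $\epsilon$. The case $k=2$ (where $\mathcal K$ consists of singletons and $n^{k-2}=1$) needs separate care. There $\mathcal K$ is a set of more than $s-1$ vertices, each of degree at least $2s+1$, and $\nu(\mathcal K)\leq s$ forces $|\mathcal K|=s$. The same greedy extension then works directly with $X=\mathcal K$.
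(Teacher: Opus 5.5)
Your proposal is correct. Its first step, Lemma~\ref{lem1} followed by Lemma~\ref{lemma sta} applied to $\mathcal K$ in uniformity $k-1$ to obtain a cover $X$ with $|X|\leq s$, is the same as the paper's. The routes diverge in how the cover is lifted to $\mathcal H$.

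\textbf{The paper's lifting.} It takes an edge $G_0$ missing the cover and passes to $\mathcal H'=\{F\in\mathcal H:F\cap G_0=\emptyset\}$. It notes that all but $O(n^{k-3})$ members of $\mathcal K$ lie in $\mathcal K_{(s-1)k+1}(\mathcal H')$. It then invokes Theorem~\ref{result for EMC} a second time (uniformity $k-1$, matching bound $s-1$), followed by Lemma~\ref{lem1} with parameter $s-1$, to produce an $s$-matching in $\mathcal H'$.

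\textbf{Your lifting.} You use the cover structure directly.
\begin{itemize}
\item For every $x\in X$ you get $d_{\mathcal K}(x)\geq|\mathcal K|-(s-1)\binom{n-1}{k-2}\geq\epsilon n^{k-2}$, and $|X|=s$ is forced. This is the precise form your $\Omega$/$o$ wording should take; no asymptotics are needed there.
\item You then greedily pick pairwise disjoint $K_i\ni x_i$ avoiding $F$. At step $i$ you must also forbid the not-yet-used $x_j$, $j>i$; the forbidden set still has size at most $sk$, so only $O(n^{k-3})$ candidates are excluded.
\item Finally you extend each $K_i$ exactly as in Lemma~\ref{lem1}, with at most $sk-s+i\leq sk$ forbidden extension vertices at step $i$.
\end{itemize}
Your separate $s=1$ bullet is redundant, since the general argument already covers it. Your $k=2$ treatment ($|\mathcal K|=s$ exactly) is fine.

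\textbf{Comparison.} Your route is more elementary: it needs no second appeal to the Erd\H{o}s matching bound or to Lemma~\ref{lem1}. It also yields extra structure, namely that every cover vertex has degree of order $n^{k-2}$ in $\mathcal K$. The saving is modest, since Lemma~\ref{lemma sta} already relies on Theorem~\ref{result for EMC} internally. The paper's route is shorter given the tools already in place and never has to analyse individual cover vertices.
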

\begin{proof}
Set $\mathcal K=\mathcal K_{sk+1}(\mathcal H)$.  Lemma~\ref{lem1} gives $\nu(\mathcal K)\leq s$, and Lemma~\ref{lemma sta}, applied with $\epsilon/2$, shows that $\tau(\mathcal K)\leq s$.  After a permutation of $[n]$, we may therefore assume
\[
\mathcal K\subseteq\mathcal H_{n,k-1,s}.
\]
It remains to prove that $\mathcal H\subseteq\mathcal H_{n,k,s}$.

Suppose instead that $G_0\in\mathcal H$ is disjoint from $[s]$, and put
\[
\mathcal H':=\{F\in\mathcal H:F\cap G_0=\emptyset\}.
\]
If $K\in\mathcal K$ is disjoint from $G_0$, at most $k$ extensions of $K$ in $\mathcal H$ meet $G_0$.  Hence
\[
d_{\mathcal H'}(K)\geq sk+1-k=(s-1)k+1,
\]
so $K\in\mathcal K_{(s-1)k+1}(\mathcal H')$.

Every member of $\mathcal K$ meets $[s]$.  Consequently, when $k\geq3$, the number of its members that meet $G_0$ is at most
\[
sk\binom{n-2}{k-3}=O(n^{k-3});
\]
when $k=2$, this number is zero because $G_0\cap[s]=\emptyset$.  It follows that
\begin{align*}
|\mathcal K_{(s-1)k+1}(\mathcal H')|
&\geq\left(\frac{s-1}{(k-2)!}+\epsilon\right)n^{k-2}-O(n^{k-3})\\
&>|\mathcal H_{n,k-1,s-1}|
\end{align*}
for all sufficiently large $n$.  If $s\geq2$, Theorem~\ref{result for EMC}, with uniformity $k-1$ and matching bound $s-1$, yields
\[
\nu\bigl(\mathcal K_{(s-1)k+1}(\mathcal H')\bigr)\geq s.
\]
Lemma~\ref{lem1}, applied with parameter $s-1$, then gives $\nu(\mathcal H')\geq s$.  For $s=1$, the displayed shadow is nonempty, which directly implies the same conclusion.  Thus $\mathcal H'$ contains disjoint edges $G_1,\ldots,G_s$, and these edges together with $G_0$ form an $(s+1)$-matching in $\mathcal H$, a contradiction.
\end{proof}

\noindent\textit{Proof of Theorem \ref{sta main2}.}
Lemma~\ref{lem2} gives
\[
|\mathcal K_{sk+1}(\mathcal H)|\geq
\left(\frac{s-1}{(k-2)!}+\frac{\epsilon}{2}\right)n^{k-2}.
\]
The conclusion now follows from Lemma~\ref{lem3}, used with $\epsilon/2$. \hfill$\square$

\noindent\textit{Proof of Theorem \ref{sta main1}.}
For every nonnegative integer $d$,
$d^l\geq l!\binom dl$.  Therefore
\begin{align*}
co_l(\mathcal H)
&\geq l!\sum_{E\in\binom{[n]}{k-1}}\binom{d_{\mathcal H}(E)}l\\
&=l!N(S_{k,l}^{k-1},\mathcal H)\\
&\geq\left(\frac{s-1}{(k-2)!}+\epsilon l!\right)n^{k+l-2}.
\end{align*}
Theorem~\ref{sta main2} completes the proof. \hfill$\square$

\noindent\textit{Proof of Theorem \ref{main2}.}
The case $k=1$ is immediate, since then $co_p(\mathcal H)=|\mathcal H|^p$ and $|\mathcal H|=\nu(\mathcal H)\leq s$.  For $p=1$, the inequality follows from Theorem~\ref{result for EMC}.  If equality holds, then
\[
|\mathcal H|=|\mathcal H_{n,k,s}|=
\left(\frac{s}{(k-1)!}+O(n^{-1})\right)n^{k-1},
\]
so Lemma~\ref{lemma sta}, used for example with $\epsilon=1/(2(k-1)!)$, gives $\tau(\mathcal H)\leq s$ for sufficiently large $n$.  Hence $\mathcal H\subseteq\mathcal H_{n,k,s}$ after a permutation; equality of their sizes proves uniqueness.  Assume that $k,p\geq2$, and let $\mathcal H$ maximize $co_p$ subject to $\nu(\mathcal H)\leq s$.  Since $\mathcal H_{n,k,s}$ is admissible,
\[
co_p(\mathcal H)\geq co_p(\mathcal H_{n,k,s})
=\left(\frac{s}{(k-2)!}+o(1)\right)n^{k+p-2}.
\]
For sufficiently large $n$, Theorem~\ref{sta main2}, with for example $\epsilon=1/(2(k-2)!)$, shows that $\tau(\mathcal H)\leq s$.  After a permutation of $[n]$, therefore,
\[
\mathcal H\subseteq\mathcal H_{n,k,s}.
\]
It follows immediately that $co_p(\mathcal H)\leq co_p(\mathcal H_{n,k,s})$.  Moreover, adding any missing edge strictly increases $co_p$, so equality forces $\mathcal H=\mathcal H_{n,k,s}$. \hfill$\square$

\noindent\textit{Proof of Theorem \ref{main1}.}
Let $\mathcal H$ maximize the number of copies of $S_{k,l}^{k-1}$ subject to $\nu(\mathcal H)\leq s$.  The canonical family is admissible and
\[
N(S_{k,l}^{k-1},\mathcal H_{n,k,s})
=\left(\frac{s}{(k-2)!l!}+o(1)\right)n^{k+l-2}.
\]
Thus Theorem~\ref{sta main1}, with for example $\epsilon=1/(2(k-2)!l!)$, gives $\tau(\mathcal H)\leq s$ for sufficiently large $n$.  After a permutation, $\mathcal H\subseteq\mathcal H_{n,k,s}$, and hence
\[
N(S_{k,l}^{k-1},\mathcal H)\leq
N(S_{k,l}^{k-1},\mathcal H_{n,k,s}).
\]
To verify uniqueness, take an edge $F\in\mathcal H_{n,k,s}$ and choose $u\in F\cap[s]$ and $v\in F\setminus\{u\}$.  The $(k-1)$-set $F\setminus\{v\}$ contains $u$ and consequently has codegree $n-k+1$ in $\mathcal H_{n,k,s}$.  For $n-k+1\geq l$, the edge $F$ therefore belongs to a copy of $S_{k,l}^{k-1}$.  Hence every proper subfamily of $\mathcal H_{n,k,s}$ misses at least one sunflower copy, proving the equality statement. \hfill$\square$

\section{The 3-uniform case}\label{section 3}
We prove Theorem~\ref{th k=3} by induction on the matching number.  The following recurrence for the canonical family will be used in the inductive case.  Separating the constant term avoids any need to define $co_0$.

\begin{lemma}\label{eq1}
Let $n,s,p$ be positive integers with $n\geq s+2$.  Then
\[
co_p(\mathcal H_{n,3,s})
=(n-1)(n-2)^p+\binom{n-1}{2}
+\sum_{j=1}^p\binom pj co_j(\mathcal H_{n-1,3,s-1}).
\]
\end{lemma}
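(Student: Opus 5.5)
The plan is to compute $co_p(\mathcal H_{n,3,s})$ directly, splitting the $2$-sets $E\in\binom{[n]}2$ according to whether they contain the vertex $1\in[s]$. Removing the vertex $1$ should leave a copy of the canonical family with parameters $n-1$ and $s-1$. Throughout, identify $\mathcal H_{n-1,3,s-1}$ with the family $\mathcal H^*:=\{F\in\binom{[2,n]}3:F\cap[2,s]\neq\emptyset\}$ on the ground set $[2,n]$. This is an isomorphic copy: relabel $[2,n]$ as $[n-1]$, sending $[2,s]$ to $[s-1]$. Since $co_j$ is invariant under isomorphism, $co_j(\mathcal H_{n-1,3,s-1})=co_j(\mathcal H^*)$. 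When $s=1$, $\mathcal H^*$ is empty and all $co_j$ vanish, which is consistent with the formula.

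First, the pairs containing $1$. There are $n-1$ pairs $E=\{1,x\}$. Every triple containing such a pair contains $1\in[s]$, so $d_{\mathcal H_{n,3,s}}(E)=n-2$. These pairs therefore contribute $(n-1)(n-2)^p$.

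Next, the pairs $E\subseteq[2,n]$. A triple $E\cup\{y\}$ lies in $\mathcal H_{n,3,s}$ either when $y=1$, which always works, or when $y\in[2,n]\setminus E$ and $E\cup\{y\}$ meets $[2,s]$. The latter holds precisely when $E\cup\{y\}\in\mathcal H^*$. Hence
\[
d_{\mathcal H_{n,3,s}}(E)=1+d_{\mathcal H^*}(E).
\]
Expanding by the binomial theorem gives
\[
\sum_{E\in\binom{[2,n]}2}\bigl(1+d_{\mathcal H^*}(E)\bigr)^p=\binom{n-1}2+\sum_{j=1}^p\binom pj\sum_{E\in\binom{[2,n]}2}d_{\mathcal H^*}(E)^j .
\]
Here the $j=0$ term is separated out as the number of pairs, which avoids defining $co_0$. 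The inner sums are exactly $co_j(\mathcal H^*)=co_j(\mathcal H_{n-1,3,s-1})$.

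Adding the two contributions gives the stated identity. The hypothesis $n\geq s+2$ only ensures that the ground sets are large enough for the families to be as described, in particular that $[2,s]\subseteq[2,n]$. There is no real obstacle here. The only point needing care is checking that the codegree decomposition $1+d_{\mathcal H^*}(E)$ is exact, with no double counting of $y=1$, and that the identification of $\mathcal H^*$ with $\mathcal H_{n-1,3,s-1}$ preserves codegree moments.
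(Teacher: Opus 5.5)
Your proposal is correct and is essentially the paper's proof. You split the pairs according to whether they contain $1$, use codegree $n-2$ for those that do and $1+d_{\mathcal H_{n-1,3,s-1}}(E)$ (after identifying $[2,n]$ with $[n-1]$) for those that do not, and then expand binomially with the constant term $\binom{n-1}{2}$ separated out.
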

\begin{proof}
Every pair containing $1$ has codegree $n-2$ in $\mathcal H_{n,3,s}$.  For $E\in\binom{[2,n]}2$, after identifying $[2,n]$ with $[n-1]$,
\[
d_{\mathcal H_{n,3,s}}(E)=d_{\mathcal H_{n-1,3,s-1}}(E)+1.
\]
Expanding the $p$th power and summing the constant term over the $\binom{n-1}{2}$ such pairs proves the identity.
\end{proof}

\begin{lemma}\label{ineq1}
Let $n,s,p$ be positive integers with $n\geq s+2$.  Then
\[
co_p(\mathcal H_{n,3,s})\geq
\left(\binom n2-\binom{n-s}2\right)(n-2)^p
\geq s(n-s)(n-2)^p.
\]
\end{lemma}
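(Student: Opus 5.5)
The plan is to bound $co_p(\mathcal H_{n,3,s})$ from below using only the pairs that meet $[s]$. Every pair $E\in\binom{[n]}2$ with $E\cap[s]\neq\emptyset$ lies in a member of $\mathcal H_{n,3,s}$ together with any third vertex. Indeed, for such $E$ and any $x\in[n]\setminus E$, the triple $E\cup\{x\}$ meets $[s]$ and so belongs to $\mathcal H_{n,3,s}$. Hence $d_{\mathcal H_{n,3,s}}(E)=n-2$ for each such pair. The number of these pairs is $\binom n2-\binom{n-s}2$, since $\binom{n-s}2$ counts the pairs avoiding $[s]$. All remaining pairs contribute nonnegative terms to $co_p$, so discarding them gives
\[
co_p(\mathcal H_{n,3,s})\geq\left(\binom n2-\binom{n-s}2\right)(n-2)^p .
\]
The hypothesis $n\geq s+2$ guarantees $n-2\geq 0$ and that the binomial expressions are meaningful. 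In fact, only $n\geq 3$ is needed for the codegree count.

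For the second inequality, I will compute
\[
\binom n2-\binom{n-s}2=\frac{n(n-1)-(n-s)(n-s-1)}{2}=\frac{s(2n-s-1)}{2}=s(n-s)+\binom s2 .
\]
Since $\binom s2\geq 0$, this is at least $s(n-s)$. Multiplying by $(n-2)^p\geq 0$ gives the claim.

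There is no real obstacle in this argument. The only points to check are the exact codegree $n-2$ for pairs meeting $[s]$ and the binomial identity. As an alternative route, one could use Lemma~\ref{eq1} together with nonnegativity of the $co_j$ terms. However, the direct count above is simpler and gives exactly the stated bound.
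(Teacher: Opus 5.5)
Your proposal is correct and follows the paper's proof: pairs meeting $[s]$ have codegree $n-2$, there are $\binom n2-\binom{n-s}2$ of them, and the second inequality comes from $\binom n2-\binom{n-s}2=\frac{s(2n-s-1)}{2}\geq s(n-s)$. Your identity $\frac{s(2n-s-1)}{2}=s(n-s)+\binom s2$ simply makes that last step explicit.
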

\begin{proof}
Each pair meeting $[s]$ has codegree $n-2$ in $\mathcal H_{n,3,s}$, and there are $\binom n2-\binom{n-s}2$ such pairs.  The second inequality follows from
\[
\binom n2-\binom{n-s}2=\frac{s(2n-s-1)}2\geq s(n-s).
\]
\end{proof}

For $i,j\in[n]$, write
\[
\mathcal H_{\bar i}:=\{F\in\mathcal H:i\notin F\},\qquad
\mathcal H_{\bar i,\bar j}:=\{F\in\mathcal H:i,j\notin F\}.
\]

\begin{lemma}\label{fact} Let
    $\mathcal H\subseteq\binom{[n]}k$ satisfy $\nu(\mathcal H)=s$.  Then
    \[
    s-1\leq\nu(\mathcal H_{\bar i})\leq s
    \]
    for every $i\in[n]$.
\end{lemma}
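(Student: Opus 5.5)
The statement is elementary, so I will prove the two inequalities directly from the definitions, using only that $\mathcal H_{\bar i}\subseteq\mathcal H$ and that removing one vertex can destroy at most one edge of a matching.

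For the upper bound, I observe that $\mathcal H_{\bar i}$ is a subfamily of $\mathcal H$. Every matching in $\mathcal H_{\bar i}$ is therefore a matching in $\mathcal H$, so $\nu(\mathcal H_{\bar i})\leq\nu(\mathcal H)=s$.

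For the lower bound, I fix a matching $\{F_1,\ldots,F_s\}$ in $\mathcal H$ of size $s$, which exists since $\nu(\mathcal H)=s$. Because the $F_j$ are pairwise disjoint, the vertex $i$ lies in at most one of them. Discarding that member (or an arbitrary one, if $i$ lies in none) leaves at least $s-1$ pairwise disjoint members of $\mathcal H$ that avoid $i$. These all belong to $\mathcal H_{\bar i}$, so $\nu(\mathcal H_{\bar i})\geq s-1$. When $s=1$ the bound $\nu\geq0$ is trivial.

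There is no real obstacle here. The only point needing care is the disjointness argument showing that a single vertex meets at most one edge of a matching.
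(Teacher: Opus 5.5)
Your proof is correct and follows essentially the same route as the paper: the upper bound because $\mathcal H_{\bar i}\subseteq\mathcal H$, and the lower bound by taking an $s$-matching and deleting the at most one member that contains $i$. Discarding an arbitrary member when none contains $i$ is harmless, since keeping all $s$ would also do.
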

\begin{proof}
		Since $\mathcal H_{\bar i}\subseteq\mathcal H$, every matching in
		$\mathcal H_{\bar i}$ is also a matching in $\mathcal H$. Hence
		$
		\nu(\mathcal H_{\bar i})\leq s.
		$ Let $\mathcal M$ be an $s$-matching in $\mathcal H$. Since the edges of
		$\mathcal M$ are pairwise disjoint, at most one edge of $\mathcal M$
		contains $i$. Remove this edge if it exists. The remaining edges form a
		matching of size at least $s-1$ in $\mathcal H_{\bar i}$. Therefore,
		$
		\nu(\mathcal H_{\bar i})\geq s-1.
		$
\end{proof}
\begin{lemma}\label{fact2} Let
    $\mathcal H\subseteq\binom{[n]}3$ satisfy $\nu(\mathcal H)=s$.  If $\{i,j\}\in\mathcal K_{3s+1}(\mathcal H)$, then
    \[
    \nu(\mathcal H_{\bar i,\bar j})\leq s-1.
    \]
\end{lemma}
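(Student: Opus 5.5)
Argue by contradiction and reuse the greedy lifting idea of Lemma~\ref{lem1}. Suppose $\nu(\mathcal H_{\bar i,\bar j})\geq s$. Then there are pairwise disjoint $G_1,\ldots,G_s\in\mathcal H$, none of which contains $i$ or $j$. Their union $U:=G_1\cup\cdots\cup G_s$ has exactly $3s$ vertices and is disjoint from $\{i,j\}$.

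Since $\{i,j\}\in\mathcal K_{3s+1}(\mathcal H)$, there are at least $3s+1$ vertices $x\notin\{i,j\}$ with $\{i,j,x\}\in\mathcal H$. At most $3s$ of these lie in $U$, so we may choose $x\notin U$ with $G_0:=\{i,j,x\}\in\mathcal H$. The set $G_0$ is disjoint from each $G_t$: the vertices $i$ and $j$ avoid every $G_t$ by the definition of $\mathcal H_{\bar i,\bar j}$, and $x\notin U$ by choice. Therefore $G_0,G_1,\ldots,G_s$ is an $(s+1)$-matching in $\mathcal H$, contradicting $\nu(\mathcal H)=s$. Hence $\nu(\mathcal H_{\bar i,\bar j})\leq s-1$.

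There is no real obstacle here. The only point to check is the count: $3s+1$ available extensions against the $3s$ forbidden vertices in $U$ leaves at least one admissible choice of $x$. The hypothesis $\nu(\mathcal H)=s$ is used only through the upper bound $\nu(\mathcal H)\leq s$.
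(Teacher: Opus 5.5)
Your proposal is correct and follows the paper's proof essentially verbatim: take an $s$-matching in $\mathcal H_{\bar i,\bar j}$, note that its union has $3s$ vertices, use $d_{\mathcal H}(\{i,j\})\geq 3s+1$ to find an edge $\{i,j,x\}$ with $x$ outside that union, and obtain an $(s+1)$-matching. Your explicit check that $G_0$ is disjoint from each $G_t$ is the same count the paper uses, just written out in more detail.
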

\begin{proof}
Otherwise, let $\mathcal M\subseteq\mathcal H_{\bar i,\bar j}$ be an $s$-matching.  Its union has $3s$ vertices.  Since $d_{\mathcal H}(\{i,j\})\geq3s+1$, some edge $\{i,j,x\}\in\mathcal H$ has $x\notin\bigcup_{F\in\mathcal M}F$.  This edge together with $\mathcal M$ is an $(s+1)$-matching, a contradiction.
\end{proof}
\begin{lemma}\label{maxdegree} Let
    $\mathcal H\subseteq\binom{[n]}3$ satisfy $\nu(\mathcal H)=s$.  If
    \[
    d_{\mathcal K_{3s+1}(\mathcal H)}(\{i\})\geq3s+1
    \]
    for some $i\in[n]$, then $\nu(\mathcal H_{\bar i})=s-1$.
\end{lemma}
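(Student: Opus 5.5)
By Lemma~\ref{fact}, $\nu(\mathcal H_{\bar i})\in\{s-1,s\}$, so it suffices to rule out $\nu(\mathcal H_{\bar i})=s$. I will argue by contradiction: assume $\mathcal H_{\bar i}$ contains an $s$-matching $\mathcal M=\{G_1,\ldots,G_s\}$, and use the many high-codegree pairs through $i$ to extend it.

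The hypothesis says that there are at least $3s+1$ vertices $j$ with $\{i,j\}\in\mathcal K_{3s+1}(\mathcal H)$, that is, $d_{\mathcal H}(\{i,j\})\geq 3s+1$. The union $V(\mathcal M)=\bigcup_{t}G_t$ has exactly $3s$ vertices, and $i\notin V(\mathcal M)$ since $\mathcal M\subseteq\mathcal H_{\bar i}$. Hence at least one such $j$ lies outside $V(\mathcal M)$. Fix such a $j$.

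The pair $\{i,j\}$ lies in at least $3s+1$ edges $\{i,j,x\}\in\mathcal H$, with distinct third vertices $x$. At most $3s$ of these $x$ lie in $V(\mathcal M)$, so some edge $\{i,j,x\}\in\mathcal H$ has $x\notin V(\mathcal M)$. This edge is disjoint from every $G_t$, because $i$, $j$ and $x$ all avoid $V(\mathcal M)$. Adding it to $\mathcal M$ gives an $(s+1)$-matching in $\mathcal H$, contradicting $\nu(\mathcal H)=s$. Therefore $\nu(\mathcal H_{\bar i})\leq s-1$, and with Lemma~\ref{fact} we get equality.

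Alternatively, the same argument can be phrased through Lemma~\ref{fact2}: since $j\notin V(\mathcal M)$, $\mathcal M$ is an $s$-matching in $\mathcal H_{\bar i,\bar j}$, contradicting Lemma~\ref{fact2} for the pair $\{i,j\}\in\mathcal K_{3s+1}(\mathcal H)$. I expect no real obstacle. The only point to check is the counting that produces $j\notin V(\mathcal M)$: it needs $3s+1$ neighbours of $i$ in the shadow against the $3s$ vertices of $\mathcal M$, which is exactly the threshold in the hypothesis.
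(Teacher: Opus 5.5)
Your proposal is correct and follows the paper's proof: take an $s$-matching in $\mathcal H_{\bar i}$, use the $3s+1$ shadow-neighbours of $i$ to find a $j$ outside its $3s$ vertices, and derive a contradiction with Lemma~\ref{fact2}. Your direct extension argument simply inlines the proof of Lemma~\ref{fact2}.
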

\begin{proof}
Lemma~\ref{fact} gives the lower bound $s-1$.  If $\nu(\mathcal H_{\bar i})=s$, let $\mathcal M\subseteq\mathcal H_{\bar i}$ be an $s$-matching.  Its union has $3s$ vertices. Since
\[
d_{\mathcal K_{3s+1}(\mathcal H)}(\{i\})\geq 3s+1,
\]
there exists a vertex
\[
j\notin\bigcup_{F\in\mathcal M}F
\quad\text{such that}\quad
\{i,j\}\in\mathcal K_{3s+1}(\mathcal H).
\]
Equivalently, $d_{\mathcal H}(\{i,j\})\geq 3s+1$. Hence
$\mathcal M\subseteq\mathcal H_{\bar i,\bar j}$, contradicting
Lemma~\ref{fact2}.
\end{proof}
\begin{lemma}\label{ineq2}
Let $p\geq2$, $0<b<a$, and let $x_1,\ldots,x_N\in[0,a]$.  If at most $c$ of these numbers exceed $b$ and $\sum_{i=1}^N x_i\leq m$, where $c\geq0$, then
\[
\sum_{i=1}^N x_i^p\leq ca^p+(m-ca)b^{p-1}.
\]
\end{lemma}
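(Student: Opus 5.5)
The plan is to split the indices according to whether $x_i$ exceeds $b$ and to bound the two groups separately. A pointwise linear bound handles the small terms, and a monotonicity argument handles the large ones. Let $L:=\{i: x_i>b\}$ and $S:=\{i:x_i\leq b\}$, so $|L|\leq c$ by hypothesis.

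\emph{Small terms.} For $i\in S$ we have $0\leq x_i\leq b$, so $x_i^p=x_i\cdot x_i^{p-1}\leq x_i b^{p-1}$. Summing over $S$ and using $\sum_{i\in S}x_i\leq m-\sum_{i\in L}x_i$ gives
\[
\sum_{i=1}^N x_i^p\leq \sum_{i\in L}x_i^p+b^{p-1}\Bigl(m-\sum_{i\in L}x_i\Bigr)
= mb^{p-1}+\sum_{i\in L}f(x_i),
\]
where $f(x):=x^p-b^{p-1}x$.

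\emph{Large terms.} It remains to show $\sum_{i\in L}f(x_i)\leq c\,f(a)=c\bigl(a^p-ab^{p-1}\bigr)$. Adding $mb^{p-1}$ to both sides of this bound gives exactly $ca^p+(m-ca)b^{p-1}$, which is the claim. First, for $x\geq b$ the derivative is $f'(x)=px^{p-1}-b^{p-1}>0$ (using $p\geq2$ and $b>0$). Hence $f$ is increasing on $[b,a]$, and each $x_i$ with $i\in L$ lies in $(b,a]$, so $f(x_i)\leq f(a)$. Second, $f(a)=a(a^{p-1}-b^{p-1})>0$ because $a>b>0$. Together with $|L|\leq c$, this gives $\sum_{i\in L}f(x_i)\leq |L|f(a)\leq cf(a)$. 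This holds even when $c$ is not an integer or when fewer than $c$ entries exceed $b$.

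The only delicate point is this last step. The nonnegativity of $f(a)$ is what allows the real bound $c$ to replace the actual count $|L|$. The bound also remains valid, though possibly weak, when $m<ca$. No other earlier results are needed.
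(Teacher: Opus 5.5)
Your proof is correct and follows the paper's argument essentially step for step: the pointwise bound $x^p\le b^{p-1}x$ on $[0,b]$, the monotonicity of $f(x)=x^p-b^{p-1}x$ on $(b,a]$, and summation using that at most $c$ terms exceed $b$. You also state explicitly that $f(a)>0$, which is what justifies replacing $|L|$ by $c$; the paper leaves this implicit.
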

\begin{proof}
For $0\leq x\leq b$, we have $x^p\leq b^{p-1}x$.  For $b<x\leq a$, the function $f(x)=x^p-b^{p-1}x$ is increasing, and hence
\[
x^p\leq b^{p-1}x+a^p-ab^{p-1}.
\]
Summing these pointwise inequalities and using the fact that at most $c$ terms exceed $b$ gives
\[
\sum_{i=1}^N x_i^p
\leq b^{p-1}m+c(a^p-ab^{p-1}),
\]
which is the claimed bound.  The argument also covers $c=0$.
\end{proof}

\begin{lemma}\label{ineq3}
If a graph $G$ satisfies $\nu(G)\leq s$ and $\Delta(G)\leq r$, then
\[
e(G)\leq s(r+1).
\]
\end{lemma}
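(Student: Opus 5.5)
Take a maximum matching $M=\{e_1,\ldots,e_t\}$ of $G$ with $t=\nu(G)\leq s$, and let $U=V(M)$, so that $|U|=2t$. By the maximality of $M$, every edge of $G$ has at least one endpoint in $U$. The crude bound $e(G)\leq\sum_{u\in U}d(u)\leq 2tr$ is too weak by roughly a factor of two, so instead I will charge the edges to the matching edges $e_i=x_iy_i$ one at a time and show that each $e_i$ accounts for at most $r+1$ edges.

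For a fixed $i$, let $E_i$ be the set of edges meeting $\{x_i,y_i\}$, where an edge that meets several matching edges is assigned to the first one it meets. Since every edge meets $U$, these sets cover $E(G)$. The key step is the following alternating-path observation. Suppose $x_i$ has a neighbour $a\notin U$ and $y_i$ has a neighbour $b\notin U$ with $a\neq b$. Then replacing $e_i$ by $x_ia$ and $y_ib$ gives a matching of size $t+1$, which is a contradiction. Hence there are only two possibilities: either at most one of $x_i,y_i$ has neighbours outside $U$, or both have exactly one such neighbour and it is the same vertex $a$, so that $x_iy_ia$ is a triangle. In the second case the edges of $E_i$ leaving $U$ number at most $2$. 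In the first case they number at most $r-1$, since they all lie at a single endpoint and that endpoint is also incident with $e_i$.

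What remains is the edges inside $U$, that is, edges between $\{x_i,y_i\}$ and other matching edges $e_j$. These are the main obstacle, because naive counting double-counts them, and they can still appear at both $x_i$ and $y_i$. My plan is to avoid separating inside and outside edges and instead bound directly by $\min\{d(x_i)+d(y_i)-1,\ \ldots\}$. Using the same augmenting argument along pairs of matching edges, one can show that if $x_i$ is adjacent to $e_j$ and $y_i$ is adjacent to $e_j$, then $e_j$ is restricted. This is the classical Chvátal--Hanson style argument, and getting it right requires care.

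A cleaner alternative that I will try first is to apply the Gallai--Edmonds/Tutte--Berge formula. There is a set $S$ such that $G-S$ has components $C_1,\ldots$ with
\[
\nu(G)=\tfrac12\Bigl(n-\operatorname{odd}(G-S)+|S|\Bigr),
\]
and one can take this in the form $\nu=|S|+\sum_j\lfloor|C_j|/2\rfloor$. The edges then split into two groups. The edges meeting $S$ number at most $|S|r$. The edges inside a component $C_j$ number at most $\min\{\binom{|C_j|}{2},\,|C_j|r/2\}$, and with $|C_j|=2q+1$ this is at most $q(r+1)$: for $2q+1\leq r+1$ the binomial bound gives $q(2q+1)\leq q(r+1)$, and otherwise $(2q+1)r/2\leq q(r+1)$ exactly when $r\leq 2q$, which holds. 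Summing over $S$ and the components gives $e(G)\leq |S|r+\sum_j q_j(r+1)\leq s(r+1)$. This last route is the one I would write up, since each step is routine once the Tutte--Berge decomposition with odd components is invoked; the even components can be handled by Gallai--Edmonds or bounded by the same estimate.
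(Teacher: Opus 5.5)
Your final Tutte--Berge route is correct, but it differs from the paper's argument. The paper uses Vizing's theorem: $E(G)$ splits into at most $\Delta(G)+1\leq r+1$ colour classes, each a matching of size at most $s$, so $e(G)\leq s(r+1)$ follows in one line. You instead take a Tutte--Berge set $S$, for which $\nu(G)=|S|+\sum_j\lfloor |C_j|/2\rfloor$ over all components of $G-S$. You then charge at most $r$ edges to each vertex of $S$ and bound each component by $\lfloor |C_j|/2\rfloor(r+1)$. Your case check for odd components is right. For an even component with $|C_j|=2q$, just write $e(G[C_j])\leq qr\leq q(r+1)$ directly rather than appealing to Gallai--Edmonds. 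You should also drop the charging sketch in your first two paragraphs: it never controls the edges inside $V(M)$, and it is not needed. As for what each route buys: Vizing is shorter and is all the paper needs, since the lemma is only used to get $|\mathcal K|\leq s(3s+1)$ and hence $D>c$. Your decomposition shows where the slack lies. Vertices of $S$ and even components contribute only $r$ per unit of $\nu$. An odd component of size $2q+1$ exceeds $qr$ by at most $\lfloor r/2\rfloor$, and only when $q\geq\lceil r/2\rceil$. So with one more line of bookkeeping (for $r\geq1$) your route yields the sharp Chv\'atal--Hanson bound $e(G)\leq \nu r+\lfloor r/2\rfloor\lfloor \nu/\lceil r/2\rceil\rfloor$, which the colouring argument cannot reach.
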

\begin{proof}
	Vizing's theorem \cite{vizing1964} states that every finite simple graph
	of maximum degree $\Delta$ admits a proper edge-colouring with at most
	$\Delta+1$ colours.  Consequently, $E(G)$ can be partitioned into at most
	\[
	\Delta(G)+1\leq r+1
	\]
	colour classes, each of which is a matching.  Since $\nu(G)\leq s$, every
	such colour class contains at most $s$ edges.  Therefore,
	\[
	e(G)\leq s\bigl(\Delta(G)+1\bigr)\leq s(r+1),
	\]
	as required.
\end{proof}

We now prove the explicit $3$-uniform result.  The case $s=1$ is included in the same induction and requires no separate estimate.

\noindent\textit{Proof of Theorem \ref{th k=3}.}
We use induction on $s$, simultaneously for all positive integers $p$, and interpret $\mathcal H_{n,3,0}$ as the empty family.  The assertion for $s=0$ is immediate.  If $\nu(\mathcal H)\leq s-1$, then the induction hypothesis gives
\[
co_p(\mathcal H)\leq co_p(\mathcal H_{n,3,s-1})
<co_p(\mathcal H_{n,3,s}),
\]
so we may assume $\nu(\mathcal H)=s$.  For $n\geq6s+3$, the canonical candidate is strictly larger than the clique candidate, since
\[
|\mathcal H_{n,3,s}|\geq s\binom{n-s}{2}
\geq s\binom{5s+3}{2}>\binom{3s+2}{3};
\]
indeed, the difference between the last two quantities is
\[
s\binom{5s+3}{2}-\binom{3s+2}{3}=2s(2s+1)^2>0.
\]
Consequently, for $p=1$, the result, including uniqueness, follows from the exact $3$-uniform matching theorem and its equality analysis \cite[Section~13]{frankl2017maximum}.  Hence assume $p\geq2$.

By Lemma~\ref{fact}, $\nu(\mathcal H_{\bar i})\in\{s-1,s\}$ for every $i\in[n]$.

\noindent\textbf{Case 1.}  Suppose that $\nu(\mathcal H_{\bar i})=s-1$ for some $i$; take $i=1$.  For $E\in\binom{[2,n]}2$,
\[
d_{\mathcal H}(E)=d_{\mathcal H_{\bar1}}(E)
+\mathbf1_{E\cup\{1\}\in\mathcal H}
\leq d_{\mathcal H_{\bar1}}(E)+1,
\]
whereas $d_{\mathcal H}(E)\leq n-2$ for pairs $E$ containing $1$.  Since $n-1\geq6(s-1)+3$, the induction hypothesis, applied to each $1\leq j\leq p$, yields
\begin{align*}
co_p(\mathcal H)
&\leq(n-1)(n-2)^p+\binom{n-1}{2}
+\sum_{j=1}^p\binom pj co_j(\mathcal H_{\bar1})\\
&\leq(n-1)(n-2)^p+\binom{n-1}{2}
+\sum_{j=1}^p\binom pj co_j(\mathcal H_{n-1,3,s-1})\\
&=co_p(\mathcal H_{n,3,s}),
\end{align*}
where the last equality is Lemma~\ref{eq1}.  If equality holds, then every pair containing $1$ has codegree $n-2$, so the full star centred at $1$ is contained in $\mathcal H$; moreover, equality in the $j=p$ term and the induction hypothesis give $\mathcal H_{\bar1}\cong\mathcal H_{n-1,3,s-1}$.  Thus $\mathcal H\cong\mathcal H_{n,3,s}$.

\noindent\textbf{Case 2.}  Suppose that $\nu(\mathcal H_{\bar i})=s$ for every $i\in[n]$, and set
\[
\mathcal K:=\mathcal K_{3s+1}(\mathcal H).
\]
Lemma~\ref{lem1} gives $\nu(\mathcal K)\leq s$.  By the contrapositive of Lemma~\ref{maxdegree}, $\Delta(\mathcal K)\leq3s$, and hence Lemma~\ref{ineq3} gives
\[
|\mathcal K|\leq c:=s(3s+1).
\]
Put
\[
a:=n-2,\qquad b:=3s,\qquad
m:=3\left(\binom n3-\binom{n-s}3\right),\qquad
D:=\binom n2-\binom{n-s}2.
\]
The exact $3$-uniform matching theorem gives
\[
\sum_{E\in\binom{[n]}2}d_{\mathcal H}(E)=3|\mathcal H|\leq m.
\]
All pair-codegrees are at most $a$, and at most $c$ of them exceed $b$.  Lemma~\ref{ineq2} therefore yields
\begin{equation}\label{three graph moment bound}
co_p(\mathcal H)\leq ca^p+(m-ca)b^{p-1}.
\end{equation}

We claim that the right-hand side of \eqref{three graph moment bound} is strictly smaller than $Da^p$.  First note that $a>b$ and $D>c$ for $n\geq6s+3$; indeed,
\[
a-b=n-3s-2\geq3s+1,
\qquad
D-c=\frac{s(2n-7s-3)}2\geq\frac{s(5s+3)}2>0.
\]
If $m\leq ca$, then
\[
ca^p+(m-ca)b^{p-1}\leq ca^p<Da^p.
\]
Suppose now that $m>ca$.  It is enough to verify
\[
(D-c)a^2>(m-ca)b,
\]
because multiplication by $a^{p-2}$ and the inequality $a>b$ then give
\[
(D-c)a^p>(m-ca)ba^{p-2}\geq(m-ca)b^{p-1}
\]
for every $p\geq2$.  Put $t=n-6s-3\geq0$.  Direct expansion, retaining the factor $3$ in the definition of $m$, gives
\begin{align*}
2\bigl((D-c)a^2-(m-ca)b\bigr)
=s\bigl(&2t^3+(20s+7)t^2+(51s^2+40s+8)t\\
&+15s^3+24s^2+14s+3\bigr)>0.
\end{align*}
This proves the claim in a directly checkable form.  Combining it with Lemma~\ref{ineq1}, we obtain
\[
co_p(\mathcal H)<D(n-2)^p\leq co_p(\mathcal H_{n,3,s}).
\]
Thus equality is impossible in Case~2, while Case~1 gives precisely the stated equality family. \hfill$\square$

The argument above is uniform in $p$: after the reduction to $p=2$, the threshold depends only on $s$.

%%%%%%%%%%%%%%%%%%%%%%%%	
\vskip.2cm
\section*{Acknowledgements}
M. Cao is supported by the National Natural Science Foundation of China (Grant 12301431) and the Beijing Natural Science Foundation (Grant 1262010).  M. Lu is supported by the National Natural Science Foundation of China (Grant 12571372).
%{\bf Acknowledgments}
%Many thanks to the anonymous referee for his/her many helpful comments and suggestions, which
%have considerably improved the presentation of the paper.

\end{document}